
\documentclass[oneside,reqno]{amsart}

\title[Accelerating the Distributed Kaczmarz]{Accelerating the Distributed Kaczmarz Algorithm by Strong Over-relaxation}
\author{Riley Borgard$^{4}$ \and Steven N. Harding$^{3}$ \and Haley Duba${}^{5}$ \and Chloe Makdad${}^{2}$ \and Jay Mayfield${}^{3}$ \and Randal Tuggle${}^{1}$ \and Eric Weber${}^{3}$ \\ \\ \tiny ${}^{1}$Berry College,\, ${}^{2}$Butler University,\, ${}^{3}$Iowa State University,\, ${}^{4}$Purdue University,\, ${}^{5}$Wheaton College}
\usepackage{amssymb}
\usepackage{amsmath}
\usepackage{mathrsfs}
\usepackage{commath}
\usepackage{graphicx}
\usepackage{esint}
\usepackage{enumitem}
\usepackage{qtree}
\usepackage{pgfplots}
\usepackage{amsthm}
\usepackage{wasysym}
\usepackage{tikz,tkz-euclide}
\usepackage{bm}
\usepackage{verbatim}
\usepackage{subcaption}

\usetkzobj{all}
\usetikzlibrary{shapes,snakes}
\usetikzlibrary{arrows.meta}
\usepackage{lscape}
\newtheoremstyle{case}{}{}{}{}{}{:}{ }{}
\theoremstyle{case}

\theoremstyle{theorem}
\newtheorem{theorem}{Theorem}[section]
\newtheorem{corollary}{Corollary}[theorem]
\newtheorem{lemma}[theorem]{Lemma}
\newtheorem{remark}[theorem]{Remark}
\newtheorem{definition}{Definition}
\newtheorem{proposition}[theorem]{Proposition}

\usepackage{booktabs}

\usepackage{caption}

\begin{document}

\subjclass[2000]{15A06, 15A24}
\keywords{Kaczmarz Algorithm}
\date{\today}

\maketitle

\begin{abstract}

The distributed Kaczmarz algorithm is an adaptation of the standard Kaczmarz algorithm to the situation in which data is distributed throughout a network represented by a tree. We isolate substructures of the network and study convergence of the distributed Kazmarz algorithm for relatively large relaxation parameters associated to these substructures. If the system is consistent, then the algorithm converges to the solution of minimal norm; however, if the system is inconsistent, then the algorithm converges to an approximated least-squares solution that is dependent on the parameters and the network topology. We show that the relaxation parameters may be larger than the standard upper-bound in literature in this context and provide numerical experiments to support our results.

\end{abstract}
\section{Introduction}
The Kaczmarz algorithm, introduced in \cite{Kaczmarz1937}, is a classic row-action projection method for solving a system of linear equations $A\vec{x}=\vec{b}$ where $A$ is a complex-valued $k \times d$ matrix. We denote row $i$ of the matrix $A$ by $\vec{a}_i^*$ so that the corresponding equation in the system is $\langle\vec{x},\vec{a}_i\rangle = \vec{a}_i^*\vec{x} = b_i$. Herein, we provide a self-contained description of the Kaczmarz algorithm for completeness. Given an initial vector $\vec{x}^{(0)}$, we find the orthogonal projection of $\vec{x}^{(0)}$ onto the hyperplane $\vec{a}_1^*\vec{x} = b_1$ to obtain the estimate $\vec{x}^{(1)}$. We repeat this procedure, iterating through the rows of $A$; once we obtain $\vec{x}^{(k)}$, we return to the first equation to obtain $\vec{x}^{(k+1)}$ and continue through the matrix as before. More precisely, for $i = n\pmod{k} + 1$, we have
\begin{equation} \label{eq1}
\vec{x}^{(n+1)} = \vec{x}^{(n)}+ \omega \frac{b_i-\vec{a}_i^*\vec{x}^{(n)}}{\lVert \vec{a}_i\rVert ^2}\vec{a}_i,
\end{equation}
where $\lVert\cdot\rVert$ is the Euclidean norm, and $\omega$ is a relaxation parameter. Stefan Kaczmarz showed in \cite{Kaczmarz1937} that if the system is consistent and the solution is unique, then the sequence $\{\vec{x}^{(n)}\}$ converges to the solution with $\omega = 1$. Later, in \cite{Tanabe1971}, Tanabe showed that the sequence $\{\vec{x}^{(n)}\}$ converges to the solution of minimal norm when the system is consistent for any $\omega \in (0,2)$.  When the system is inconsistent, it was shown in \cite{Eggermont1981} (see also \cite{Natterer2001}) that for every $\omega \in (0,2)$, the sequence $\{\vec{x}^{(n)}\}$ converges, and for $\omega$ small, the limit is an approximation of a weighted least-squares solution.

Since each estimate is obtained by projecting the previous estimate onto the appropriate hyperplane, the Kaczmarz algorithm is well-suited for an adaptation to a network structure where each equation in the system corresponds to a node in a tree, an undirected graph excluding cycles. This was formalized in \cite{Hegde2019}. Such a system is said to be distributed, as any node is uninformed of the equation of another node. A distributed system has many benefits in practical applications, e.g. data that is too large to store on a single server or cannot be explicitly shared for privacy reasons. Further, for large distributed systems, we can exploit parallelism to speed up the real time of iterations within the algorithm.

\subsection{Main Results}
Our main focus in the present paper is to consider an extension of the Kaczmarz algorithm that can solve a system of linear equations when the equations are distributed across a network.  This extension was introduced in \cite{Hegde2019}, where it was shown that the distributed form of the Kaczmarz algorithm converges for any relaxation parameter $\omega \in (0,2)$.  It was also shown that, as is the case with the classical Kaczmarz algorithm, the convergence rate can be accelerated by choosing $\omega > 1$.  Moreover, it was observed that convergence can occur with $\omega > 2$, which cannot happen in the classical case.

Our main results concern proving convergence for relaxation parameters $\omega > 2$, as well as determining what the algorithm converges to.  First, we prove that with large relaxation parameters that satisfy a certain admissibility condition (Definition \ref{D:admissible}), when the system is consistent, the distributed Kaczmarz algorithm converges to the solution of minimal norm independent of the relaxation parameters (Theorem \ref{Theorem3.1}).  Second, we prove that under the same admissibility conditions, when the system is inconsistent, the distributed Kaczmarz algorithm will yield approximations of a weighted least-squares solution as the parameters tend to 0 (Theorem \ref{mainthm}). 

We then consider possible values for the relaxation parameters that satisfy the admissibility condition.  We prove an estimate on the sizes of the relaxation parameters at nodes that are near the leaves of the tree (Corollary \ref{PGnormcor}).  Our estimate allows for relaxation parameters that are larger than 2.  In Section \ref{sec:ex}, we present numerical examples that illustrate convergence with relaxation parameters greater than 2 that is faster than with parameters less than 2.
\subsection{Notation}
We define the network for a distributed system as a tree in graph theory parlance--that is, a connected graph consisting of $k$ vertices, each corresponding to one equation in the system, with edges that connect particular pairs of vertices in such a way that there are no cycles. Herein, we only consider trees which are rooted, having a single vertex $r$ designated as the root. We denote arbitrary vertices of the tree by either $u$ or $v$. We write $u \preceq v$ when either $u = v$ or $u$ is on a path from $r$ to $v$. We further write $u \to v$ or $v \leftarrow u$ when $u \neq v$ and $u \preceq x \preceq v$ implies either $u = x$ or $x = v$. From this partial ordering on the set of vertices, we define a leaf of the tree as a vertex $\ell$ satisfying $\ell \preceq u$ implies $u = \ell$, and we denote the collection of all of the leaves by $\mathcal{L}$. Whenever necessary, we enumerate the leaves as $\ell_1,\ell_2,...,\ell_t$.

A weight $w$ is a positive function on the paths of the tree, which we denote by $w(u,v)$ where $u \preceq v$, that satisfies the following two conditions: For every vertex $u \notin \mathcal{L}$,
\begin{equation}
\label{eq:sumweights1}
\sum_{v\,:\,u \to v}w(u,v) = 1
\end{equation}
and, if $u = u_1 \to u_2 \to \cdots\to u_J = v$, then
\begin{equation}
\label{eq5}
w(u,v) = \prod_{j=1}^{J-1} w(u_j,u_{j+1}).
\end{equation}

When working with a distributed network represented by a rooted tree, it is convenient to index each equation by the corresponding vertex, and we proceed with this convention throughout the remainder of the paper. We recall, for a linear transformation $T$ on $\mathcal{H}$, the kernel (null space) $\mathcal{N}(T) = \{\vec{x}\in\mathcal{H}\,:\,T\vec{x} = \vec{0}\}$ and the range $\mathcal{R}(T) = \{T\vec{x}\,:\,\vec{x}\in\mathcal{H}\}$. We define $S_v\vec{x} := \vec{a}_v^*\vec{x}$, and let $P_v$ be the orthogonal projection onto $\mathcal{N}(S_v)$,
\begin{equation} \label{operatorP}
    P_v\vec{x} = (I - S_v^*(S_vS_v^*)^{-1}S_v)\vec{x} = \vec{x} - \dfrac{\vec{a}_v^*\vec{x}}{\|\vec{a}_v\|^2}\vec{a}_v.
\end{equation}
Then, let $Q_v$ be the affine projection onto the hyperplane $S_v\vec{x} = b_v$,
\begin{equation}\label{operatorQ}
    Q_v\vec{x} = \vec{x} + \frac{b_v - \vec{a}_v^*\vec{x}}{\|\vec{a}_v\|^2}\vec{a}_v.
\end{equation}
The relationship between $P_v$ and $Q_v$ is then
\begin{align}\label{littleh}
Q_v\vec{x} = P_v\vec{x} + \vec{h}_v,
\end{align}
where $\vec{h}_v$ is the vector that satisfies $S_v\vec{h}_v = b_v$ and is orthogonal to $\mathcal{N}(S_v)$. The vector $\vec{\omega}$ refers to the entire collection of relaxation parameters, and notation associated with $\vec{\omega}$ implies a dependence on the relaxation parameters. Specifically, the component $\omega_v$ in $\vec{\omega}$ is the relaxation parameter associated with vertex $v$. We further define the associated operators $P_v^{\vec{\omega}}$ and $Q^{\vec{\omega}}_v$ by
\begin{align} 
    P^{\vec{\omega}}_v &= (1 - \omega_v)I + \omega_v P_v,\label{operatorPw} \\
    Q^{\vec{\omega}}_v &= (1 - \omega_v)I + \omega_vQ_v. \label{operatorQw}
\end{align}
The relationship between $P^{\vec{\omega}}_v$ and $Q^{\vec{\omega}}_v$ is then
\begin{align}
    Q^{\vec{\omega}}_v\vec{x} = P^{\vec{\omega}}_v\vec{x} + \omega_v\vec{h}_v, \label{QandP}
\end{align}
with $\vec{h}_v$ as in Equation \ref{littleh}.

\begin{lemma}\label{contraction}
Let $\omega_v \in (0,2)$. Then $P_v^{\vec{\omega}}$ is a contraction (i.e., $\|P_v^{\vec{\omega}}\| \leq 1$). Moreover, $\|P_v^{\vec{\omega}}\vec{x}\| \leq \|\vec{x}\|$ with equality if and only if $\vec{x} \in \mathcal{N}(S_v)$.
\end{lemma}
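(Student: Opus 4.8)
The plan is to reduce everything to the orthogonal decomposition $\mathcal{H} = \mathcal{N}(S_v) \oplus \mathcal{N}(S_v)^{\perp}$, on which $P_v^{\vec{\omega}}$ acts diagonally. Write an arbitrary $\vec{x} \in \mathcal{H}$ as $\vec{x} = \vec{x}_0 + \vec{x}_1$ with $\vec{x}_0 = P_v\vec{x} \in \mathcal{N}(S_v)$ and $\vec{x}_1 = \vec{x} - P_v\vec{x}$; by Equation \eqref{operatorP}, $\vec{x}_1 = \tfrac{\vec{a}_v^*\vec{x}}{\|\vec{a}_v\|^2}\vec{a}_v \in \mathcal{N}(S_v)^{\perp}$, so indeed $\vec{x}_0 \perp \vec{x}_1$. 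Using the definition \eqref{operatorPw} of $P_v^{\vec{\omega}}$ together with $P_v\vec{x} = \vec{x}_0$, a one-line computation gives
\begin{equation*}
P_v^{\vec{\omega}}\vec{x} = (1-\omega_v)\vec{x} + \omega_v\vec{x}_0 = \vec{x}_0 + (1-\omega_v)\vec{x}_1 .
\end{equation*}
Thus $P_v^{\vec{\omega}}$ fixes $\mathcal{N}(S_v)$ pointwise and scales $\mathcal{N}(S_v)^{\perp}$ by the factor $1-\omega_v$.

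Next I would invoke the Pythagorean identity, since $\vec{x}_0 \perp \vec{x}_1$ (and hence $\vec{x}_0 \perp (1-\omega_v)\vec{x}_1$):
\begin{equation*}
\|P_v^{\vec{\omega}}\vec{x}\|^2 = \|\vec{x}_0\|^2 + (1-\omega_v)^2\|\vec{x}_1\|^2 .
\end{equation*}
The hypothesis $\omega_v \in (0,2)$ is exactly the statement $|1-\omega_v| < 1$, so $(1-\omega_v)^2 < 1$, and therefore
\begin{equation*}
\|P_v^{\vec{\omega}}\vec{x}\|^2 \le \|\vec{x}_0\|^2 + \|\vec{x}_1\|^2 = \|\vec{x}\|^2 ,
\end{equation*}
which proves both $\|P_v^{\vec{\omega}}\| \le 1$ and the pointwise inequality $\|P_v^{\vec{\omega}}\vec{x}\| \le \|\vec{x}\|$.

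Finally, for the equality case I would trace back through the chain of inequalities: equality forces $(1-\omega_v)^2\|\vec{x}_1\|^2 = \|\vec{x}_1\|^2$, and since $(1-\omega_v)^2 < 1$ this requires $\|\vec{x}_1\| = 0$, i.e. $\vec{x} = \vec{x}_0 \in \mathcal{N}(S_v)$; conversely, if $\vec{x} \in \mathcal{N}(S_v)$ then $\vec{x}_1 = \vec{0}$ and $P_v^{\vec{\omega}}\vec{x} = \vec{x}$, so the norm is preserved. I do not anticipate a genuine obstacle here: the lemma is essentially the spectral remark that the self-adjoint operator $P_v^{\vec{\omega}}$ has spectrum $\{1,\,1-\omega_v\}$, whence $\|P_v^{\vec{\omega}}\| = \max\{1,\,|1-\omega_v|\} = 1$. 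The only point demanding a little care is bookkeeping the \emph{strict} inequality $(1-\omega_v)^2 < 1$ so as to obtain the "if and only if" in the equality statement rather than merely one direction.
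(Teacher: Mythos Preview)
Your proof is correct and is essentially identical to the paper's: both decompose $\vec{x}$ as $P_v\vec{x} + (I-P_v)\vec{x}$, observe that $P_v^{\vec{\omega}}\vec{x} = P_v\vec{x} + (1-\omega_v)(I-P_v)\vec{x}$, apply the Pythagorean identity, and use $|1-\omega_v|<1$ to get the inequality with equality iff $(I-P_v)\vec{x}=\vec{0}$. Your version is slightly more explicit about the equality case and adds the spectral interpretation, but the argument is the same.
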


\begin{proof}
The argument is fairly straightforward, yet it illustrates the sufficient condition that $\omega_v \in (0,2)$.
\begin{align*}
\|P^{\vec{\omega}}_v\vec{x}\|^2 &= \|P_v(\vec{x}) + (1 - \omega_v)(I - P_v)(\vec{x})\|^2 \\
&= \|P_v\vec{x}\|^2 + |1 - \omega_v|^2\|(I - P_v)(\vec{x})\|^2 \\
&\leq \|P_v\vec{x}\|^2 + \|(I - P_v)(\vec{x})\|^2 = \|\vec{x}\|^2
\end{align*}
with equality if and only if $\vec{x} = P_v\vec{x}$.
\end{proof}


\subsection{The Distributed Kaczmarz Algorithm with Relaxation}
Each iteration of the distributed Kaczmarz algorithm begins with an estimate $\vec{x}^{(n)}$ at the root of the tree; the superscript indicates the number of times that we iterated through the tree to obtain the estimate for some given initial estimate $\vec{x}^{(0)}$. An iteration of the algorithm occurs in two stages: {\em dispersion} followed by {\em pooling}. In the dispersion stage, a new estimate is first calculated at the root using the Kaczmarz update with the relaxation parameter $\omega_r$,
\[
\vec{x}_r^{(n)} = \vec{x}^{(n)} + \omega_r\dfrac{b_r - \vec{a}_r^*\vec{x}^{(n)}}{\|\vec{a}_r\|^2}\vec{a}_r =: Q_r^{\vec{\omega}}\vec{x}^{(n)}.
\]
Each subsequent vertex $v \neq r$ receives an input estimate $\vec{x}^{(n)}_u$ from its parent $u$ (i.e., $u \leftarrow v$), and a new estimate is calculated at the vertex $v$ using the Kaczmarz update with relaxation parameter $\omega_v$,
\[
\vec{x}^{(n)}_v = \vec{x}^{(n)}_u + \omega_v\frac{b_v-\vec{a}_v^*\vec{x}_u^{(n)}}{\lVert \vec{a}_v\rVert ^2}\vec{a}_v =: Q^{\vec{\omega}}_v\vec{x}^{(n)}_u.
\]
Each leaf $\ell$ then has its own estimate $\vec{x}^{(n)}_\ell$ at the end of the dispersion stage.

In the pooling stage, we back-propagate the leaf estimates, weighting along the edges, to obtain the next iterate in the algorithm,
\[
\vec{x}^{(n+1)} = \sum_{\ell \in \mathcal{L}}w(r,\ell)\vec{x}^{(n)}_\ell.
\]

It was shown in \cite{Hegde2019} that the distributed Kaczmarz algorithm with fixed relaxation parameters $\omega_v = \omega \in (0,2)$ converges to the solution of minimal norm when the system is consistent and converges to an approximate solution related to some weighted least-squares solution, dependent on the parameters and the network topology, when the system is inconsistent.


\subsection{Substructures of a Network}\label{sec2}
A \emph{subnetwork} $G$ of a network is a subset of vertices and edges satisfying the following conditions:
\begin{enumerate}
    \item If $u \in G$, $x \leftarrow u$ and $x \rightarrow v$, then $v \in G$.
    \item If $u \in G$ and $u \to v$, then $G$ contains $v$ and the edge between $u$ and $v$.
    \item Let $u,v \in G$. The path from $u$ to $v$ does not include the root.
\end{enumerate}
The topology of a subnetwork can thus be characterized as follows: It is either a network itself or a \emph{leaf subnetwork} (a set containing only leaves) or a combination of both. Figure \ref{fig:stepsize} illustrates a network with both types of subnetworks.

\begin{figure}[h]
\centering
\begin{tikzpicture}    

\def\w{0.8}
\def\u{0.6}

\node[circle,fill=black!20] (1) at (1*\w,0*\u) {1};
\node[circle,fill=black!20] (2) at (-1*\w,-2*\u) {2};
\node[circle,fill=black!20] (3) at (4*\w,-2*\u) {3};
\node[circle,fill=black!20] (4) at (-2*\w,-4*\u) {4};
\node[circle,fill=black!20] (5) at (0*\w,-4*\u) {5};
\node[circle,fill=black!20] (6) at (2*\w,-4*\u) {6};
\node[circle,fill=black!20] (7) at (4*\w,-4*\u) {7};
\node[circle,fill=black!20] (8) at (6*\w,-4*\u) {8};

\draw[-,thick] (1) to (2);
\draw[-,thick] (1) to (3);
\draw[-,thick] (2) to (4);
\draw[-,thick] (2) to (5);
\draw[-,thick] (3) to (6);
\draw[-,thick] (3) to (7);
\draw[-,thick] (3) to (8);

\draw[black,thick,dashed] (-1*\w,-3.3*\u) ellipse (1.7*\w cm and 1.9*\u cm);
\draw[black,thick,dashed] (4*\w,-4*\u) ellipse (2.5*\w cm and 1.1*\u cm);



\end{tikzpicture}
\caption{A network with the two types of subnetworks}
\label{fig:stepsize}
\end{figure}
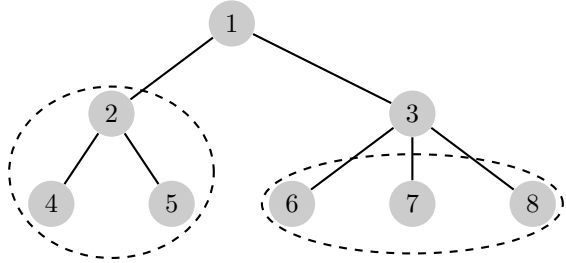

Throughout the paper, we assume that every leaf is included in a subnetwork. The purpose of each subnetwork is to isolate a substructure of the network, so we assume that the subnetworks are pairwise disjoint. We denote the subnetworks by $G_1$, $G_2$, ..., $G_c$ and denote the vertex that immediately precedes $G_i$ by $g_i$. We further denote the leaves in $G_i$ by $\ell_{i,1}$, $\ell_{i,2}$, ...,$\ell_{i,t_i}$. We last denote the root of the largest tree in $G_i$ with the leaf $\ell_{i,j}$ by $r_{i,j}$. For example, in Figure \ref{fig:stepsize}, we have the following:

\begin{itemize}
\item $G_1 = \{2,4,5\}$, $G_2 = \{6,7,8\}$
\item $g_1 = 1$, $g_2 = 3$
\item $\ell_{1,1} = 4$, $\ell_{1,2} = 5$, $\ell_{2,1} = 6$, $\ell_{2,2} = 7$, $\ell_{2,3} = 8$
\item $r_{1,1} = 2$, $r_{1,2} = 2$, $r_{2,1} = 6$, $r_{2,2} = 7$, $r_{2,3} = 8$
\end{itemize}

As each subnetwork is a forest of trees, we may interpret an iteration of $\vec{x}^{(n)}_{g_i}$ through the subnetwork $G_i$ as a weighted average of the iterations through the corresponding trees. We therefore define the following operators:
\begin{align}
P_{G_i}^{\vec{\omega}} &=\sum_{j=1}^{t_i}w(g_i,\ell_{i,j})P_{\ell_{i,j}}^{\vec{\omega}}...P_{r_{i,j}}^{\vec{\omega}},
\label{eq15}\\
P^{\vec{\omega}}_{G_i,r} &=P^{\vec{\omega}}_{G_i} P^{\vec{\omega}}_{g_i}...P^{\vec{\omega}}_r, \label{eq16}\\
P^{\vec{\omega}}&=\sum_{i=1}^c w(r,g_i)P_{G_i,r}^{\vec{\omega}}, \label{Pomega}
\end{align}
where $P^{\vec{\omega}}_v...P^{\vec{\omega}}_u$ with $u \preceq v$ is the composition of those operators $P^{\vec{\omega}}_x$ where $u \preceq x \preceq v$ in the appropriate order designated by the path from $u$ to $v$. We define analogous operators in $Q$.

We will show in Section \ref{inconsistent} that the substructures in a network generally admit relatively large relaxation parameters for convergence. We assume that the relaxation parameters satisfy certain admissibility conditions.

\begin{definition} \label{D:admissible}
We say that the relaxation parameters $\omega_{v}$ are \emph{admissible} provided that:
\begin{enumerate}
    \item If $v\not\in G_i$ for every $i$, then $\omega_v\in (0,2)$.
    \item For each $i \in \{1,2,...,c\}$, there exists a constant $\alpha_i < 1$ such that
    \[
    \lVert P_{G_i}^{\vec{\omega}}\vec{x}\rVert\le \alpha_i \lVert \vec{x}\rVert
    \]
    for all $\vec{x
    }\in\mathrm{span}\{\vec{a}_u : u\in G_i\}$.
\end{enumerate}
\end{definition}

\begin{lemma}\label{contractions}
If the relaxation parameters are admissible, then $P_{G_i}^{\vec{\omega}}$, $P_{G_i,r}^{\vec{\omega}}$ and $P^{\vec{\omega}}$ are contractions.
\end{lemma}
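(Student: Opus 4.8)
The plan is to show each of $P_{G_i}^{\vec\omega}$, $P_{G_i,r}^{\vec\omega}$, and $P^{\vec\omega}$ has operator norm at most $1$, building up from the innermost structures outward. The key observation is that the ordinary (unrelaxed) projections $P_v$ are contractions, that $P_v^{\vec\omega}$ is a contraction whenever $\omega_v \in (0,2)$ by Lemma \ref{contraction}, and that convex combinations and compositions of contractions are contractions; the only place the admissibility hypothesis genuinely adds something is at the subnetwork operators $P_{G_i}^{\vec\omega}$, where the relaxation parameters inside $G_i$ may exceed $2$.

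\medskip

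\emph{Step 1: $P_{G_i}^{\vec\omega}$ is a contraction.} Decompose an arbitrary $\vec x \in \mathcal H$ as $\vec x = \vec x_0 + \vec x_1$, where $\vec x_1 \in \mathrm{span}\{\vec a_u : u \in G_i\}$ and $\vec x_0$ is orthogonal to it. I would like to say $P_{G_i}^{\vec\omega}$ leaves $\vec x_0$ fixed (each $P_{\ell_{i,j}}^{\vec\omega}\cdots P_{r_{i,j}}^{\vec\omega}$ acts as the identity on vectors orthogonal to all the $\vec a_u$, $u \in G_i$, since each elementary factor does), and maps $\vec x_1$ into $\mathrm{span}\{\vec a_u : u \in G_i\}$; then admissibility condition (2) gives $\|P_{G_i}^{\vec\omega}\vec x_1\| \le \alpha_i \|\vec x_1\|$ and the two pieces stay orthogonal, so
\[
\|P_{G_i}^{\vec\omega}\vec x\|^2 = \|\vec x_0\|^2 + \|P_{G_i}^{\vec\omega}\vec x_1\|^2 \le \|\vec x_0\|^2 + \alpha_i^2\|\vec x_1\|^2 \le \|\vec x\|^2.
\]
The point requiring care is the invariance claim: one must check that the range of $P_{G_i}^{\vec\omega}$ restricted to $\mathrm{span}\{\vec a_u : u \in G_i\}$ stays inside that span, and that the orthogonal complement is fixed. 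Each factor $P_v^{\vec\omega} = (1-\omega_v)I + \omega_v P_v$ is a polynomial in $P_v = I - \vec a_v\vec a_v^*/\|\vec a_v\|^2$, hence maps $\mathrm{span}\{\vec a_u : u \in G_i\}$ into itself and fixes its orthogonal complement pointwise; compositions and weighted sums of such maps inherit both properties.

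\medskip

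\emph{Step 2: $P_{G_i,r}^{\vec\omega}$ is a contraction.} By definition $P_{G_i,r}^{\vec\omega} = P_{G_i}^{\vec\omega} P_{g_i}^{\vec\omega}\cdots P_r^{\vec\omega}$. Every vertex $x$ with $r \preceq x \preceq g_i$ satisfies $x \notin G_j$ for any $j$ (it lies on the path from the root to $g_i$, and the path from any $u,v$ in a subnetwork avoids the root, so by admissibility condition (1), $\omega_x \in (0,2)$); hence each such $P_x^{\vec\omega}$ is a contraction by Lemma \ref{contraction}. Composing with $P_{G_i}^{\vec\omega}$ from Step 1, which is also a contraction, gives $\|P_{G_i,r}^{\vec\omega}\| \le 1$.

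\medskip

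\emph{Step 3: $P^{\vec\omega}$ is a contraction.} From $P^{\vec\omega} = \sum_{i=1}^c w(r,g_i)P_{G_i,r}^{\vec\omega}$ with $\sum_i w(r,g_i) = 1$ (this follows from \eqref{eq:sumweights1} and \eqref{eq5} applied to the paths $r \to \cdots \to g_i$, noting the $g_i$ exhaust the relevant branches since every leaf lies in some subnetwork), the triangle inequality gives $\|P^{\vec\omega}\vec x\| \le \sum_i w(r,g_i)\|P_{G_i,r}^{\vec\omega}\vec x\| \le \sum_i w(r,g_i)\|\vec x\| = \|\vec x\|$. The main obstacle is Step 1 — specifically, verifying the invariant-subspace/fixed-complement structure of the subnetwork operators carefully enough that admissibility condition (2), which is only assumed on the span of the $\vec a_u$ with $u \in G_i$, actually controls the norm on all of $\mathcal H$; Steps 2 and 3 are then routine bookkeeping about the partial order and the weights.
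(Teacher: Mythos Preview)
Your proof is correct and takes essentially the same approach as the paper's: decompose the ambient space into $\mathrm{span}\{\vec a_u : u \in G_i\}$ and its orthogonal complement, observe that $P^{\vec\omega}_{G_i}$ acts as the identity on the complement and leaves the span invariant (where admissibility condition (2) bounds it by $\alpha_i$), and then obtain the contraction bounds for $P^{\vec\omega}_{G_i,r}$ and $P^{\vec\omega}$ by composing with the $P^{\vec\omega}_v$ along the path $r \preceq \cdots \preceq g_i$ (each a contraction by Lemma~\ref{contraction}) and taking the convex combination over $i$. The paper's argument is structurally identical but much more terse; your explicit verification of the invariance and of why the path vertices satisfy $\omega_v \in (0,2)$ fills in details the paper leaves implicit.
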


\begin{proof}
Suppose that $\vec{x} \in \{\vec{a}_u\,:\,u \in G_i\}^\perp$, the subspace orthogonal to the vectors in the set $\{\vec{a}_u\,:\, u \in G_i\}$. Then $P^{\vec{\omega}}_u\vec{x} = \vec{x}$ for every $u \in G_i$, and we have
\[
P^{\vec{\omega}}_{G_i}\vec{x} = \sum_{j=1}^{t_i}w(g_i,\ell_{i,j})P^{\vec{\omega}}_{\ell_{i,j}}\cdots P^{\vec{\omega}}_{r_{i,j}}\vec{x}= \vec{x}.
\]
Since $\text{span}\{\vec{a}_u\,:\,u \in G_i\}$ is an invariant subspace for $P^{\vec{\omega}}_{G_i}$, the operator $P^{\vec{\omega}}_{G_i}$ is a contraction. Then, from Lemma \ref{contraction}, it follows that $P^{\vec{\omega}}_{G_i,r}$ and, subsequently, $P^{\vec{\omega}}$ are contractions.
\end{proof}
\subsection{Related Work}


The Kaczmarz method was originally introduced in \cite{Kaczmarz1937}. Variations on the Kaczmarz method allowed for relaxation parameters \cite{Tanabe1971}, re-ordering equations to speed up convergence~\cite{HS-78}, or considering block versions of the Kaczmarz method with relaxation matrices $\Omega_i$ (\cite{EHL-81}, see also \cite{cimmino1938calcolo}).  Block versions of the method allow for over-relaxation parameters of greater than 2 as demonstrated in  \cite{censor2001component,necoara2019faster}, in similar fashion to our results in Section \ref{sec:leaf}.

Relatively recently, choosing the next equation randomly has been shown to dramatically improve the rate of convergence of the algorithm \cite{SV-09a,zouzias2013randomized,needell2014paved,needell2015randomized,chen2018kaczmarz}.  Moreover, this randomized version of the Kaczmarz algorithm has been shown to be comparable to the gradient descent method \cite{NSW16a}.  In our situation, the equations are \emph{a priori} distributed across a network with a fixed topology; this determines the next equation to use to update the estimate and does not allow a choice.  Instead, we demonstrate that the convergence rate can be improved by relaxation parameters greater than 2 in Section \ref{sec:ex}.  

A distributed version of the Kaczmarz algorithm was introduced in \cite{kamath2015distributed}.  The main ideas presented there are very similar to ours:  updated estimates are obtained from prior estimates using the Kaczmarz update with the equations that are available at the node, and distributed estimates are averaged together at a single node (which the authors refer to as a fusion center, for us it is the root of the tree).    Another distributed version was proposed in \cite{liu2014asynchronous}, which has a shared memory architecture.

\section{Consistent Systems}
We prove Theorem \ref{Theorem3.1}, the main result of this section, using a sequence of lemmas. We follow the argument presented in \cite{Hegde2019}, adapting those lemmas for our assumptions on the relaxation parameters. We also direct the reader to the original source \cite{Natterer2001}.


\begin{lemma} \label{genEig}
Let $\mathcal{H}$ be a Hilbert space and $\mathcal{K}$ be a closed subspace of $\mathcal{H}$. Let $U$ be a linear operator on $\mathcal{H}$ with the following properties:
\begin{enumerate}
    \item $U\vec{x}=\vec{x}$ for every $\vec{x}\in\mathcal{K}$,
    \item $\mathcal{K}^\perp$ is an invariant subspace for $U$ (i.e., $U(\mathcal{K}^\perp)\subseteq \mathcal{K}^\perp$),
    \item $\| U|_{\mathcal{K}^\perp}\| < 1$.
\end{enumerate}
Given a sequence $\{\vec{x}_k\}$ in $\mathcal{H}$ such that
\[
\lVert \vec{x}_k\rVert \le 1 \text{ and } \lim_{k\to \infty}\lVert U\vec{x}_k\rVert=1,
\]
it follows that
\[
\lim_{k\to\infty} (I-U)\vec{x}_k=\vec{0}.
\]
\end{lemma}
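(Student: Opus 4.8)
The plan is to use the orthogonal decomposition $\mathcal{H} = \mathcal{K} \oplus \mathcal{K}^\perp$ together with the fact that $U$ acts as the identity on $\mathcal{K}$ and as a strict contraction on $\mathcal{K}^\perp$. For each $k$, let $\vec{p}_k$ and $\vec{q}_k$ denote the orthogonal projections of $\vec{x}_k$ onto $\mathcal{K}$ and $\mathcal{K}^\perp$, so that $\vec{x}_k = \vec{p}_k + \vec{q}_k$ with $\langle \vec{p}_k, \vec{q}_k\rangle = 0$. By hypothesis (1) we have $U\vec{p}_k = \vec{p}_k$, and by hypothesis (2) we have $U\vec{q}_k \in \mathcal{K}^\perp$; hence $U\vec{x}_k = \vec{p}_k + U\vec{q}_k$ is again an orthogonal decomposition relative to $\mathcal{K} \oplus \mathcal{K}^\perp$. (Note that invariance of $\mathcal{K}$ itself is automatic from (1), so nothing extra is needed here.)

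Next I would estimate norms. Put $c := \|U|_{\mathcal{K}^\perp}\| < 1$. The Pythagorean identity, applied to both decompositions above, gives
\[
\|U\vec{x}_k\|^2 = \|\vec{p}_k\|^2 + \|U\vec{q}_k\|^2 \le \|\vec{p}_k\|^2 + c^2\|\vec{q}_k\|^2 = \|\vec{x}_k\|^2 - (1 - c^2)\|\vec{q}_k\|^2 \le 1 - (1 - c^2)\|\vec{q}_k\|^2,
\]
where the last step uses $\|\vec{x}_k\|^2 = \|\vec{p}_k\|^2 + \|\vec{q}_k\|^2 \le 1$. Rearranging yields $(1 - c^2)\|\vec{q}_k\|^2 \le 1 - \|U\vec{x}_k\|^2$. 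Since $\|U\vec{x}_k\| \to 1$ by hypothesis and $1 - c^2 > 0$, this forces $\|\vec{q}_k\| \to 0$.

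Finally, because $(I - U)\vec{p}_k = \vec{0}$, we have $(I - U)\vec{x}_k = (I - U)\vec{q}_k$, and therefore
\[
\|(I - U)\vec{x}_k\| \le \|\vec{q}_k\| + \|U\vec{q}_k\| \le (1 + c)\|\vec{q}_k\| \to 0,
\]
which is exactly the assertion $\lim_{k\to\infty}(I - U)\vec{x}_k = \vec{0}$.

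There is no real difficulty in this argument; the one point deserving emphasis is that the \emph{strict} inequality $\|U|_{\mathcal{K}^\perp}\| < 1$ is what makes the coefficient $1 - c^2$ in the displayed estimate strictly positive, so that a bound on $\|U\vec{x}_k\|$ transfers to a bound on the $\mathcal{K}^\perp$-component $\|\vec{q}_k\|$. If one only knew $\|U|_{\mathcal{K}^\perp}\| \le 1$, the chain of inequalities would still hold but would give no information, and the conclusion can genuinely fail. So the "obstacle," such as it is, is simply to recognize where strictness is used.
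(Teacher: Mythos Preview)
Your proof is correct and follows essentially the same approach as the paper: decompose $\vec{x}_k$ orthogonally along $\mathcal{K}\oplus\mathcal{K}^\perp$, use the Pythagorean identity and the strict contraction bound on $\mathcal{K}^\perp$ to force the $\mathcal{K}^\perp$-component to vanish, and conclude via $(I-U)\vec{x}_k=(I-U)\vec{q}_k$. Your presentation is in fact slightly cleaner than the paper's, which routes the same inequality through a $\liminf/\limsup$ chain rather than directly rearranging to $(1-c^2)\|\vec{q}_k\|^2\le 1-\|U\vec{x}_k\|^2$.
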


\begin{proof}
For convenience, we denote $\alpha = \|U|_{\mathcal{K}^\perp}\|$, and let $P$ be the orthogonal projection onto $\mathcal{K}^\perp$. We claim that $\|P\vec{x}_k\| \rightarrow 0$. Indeed, we have
\begin{align*}
1 &= \lim_{k\to\infty}\|U\vec{x}_k\|^2 \\
&= \lim_{k\to\infty}\|U(I - P)\vec{x}_k + UP\vec{x}_k\|^2 \\
&= \lim_{k\to\infty}\left(\|(I - P)\vec{x}_k\|^2 + \|UP\vec{x}_k\|^2\right) \\
&\leq \lim\inf\left(\|(I - P)\vec{x}_k\|^2 + \alpha^2\|P\vec{x}_k\|^2\right) \\
&= \lim\inf\left(\|\vec{x}_k\|^2 - (1 - \alpha^2)\|P\vec{x}_k\|^2\right) \\
&\leq \lim\inf\left(1 - (1 - \alpha^2)\|P\vec{x}_k\|^2\right) \\
&= 1 - (1 - \alpha^2)\lim\sup\|P\vec{x}_k\|^2 \leq 1.
\end{align*}
We therefore observe that $1 - (1 - \alpha^2)\lim\sup\|P\vec{x}_k\|^2 = 1$ so that $\lim\sup\|P\vec{x}_k\| = 0$, as desired. Hence $$\lim_{k\to\infty}(I-U)\vec{x}_k=\lim_{k\to\infty}(I-U)(P\vec{x}_k)=\vec{0}.$$

\end{proof}



\begin{lemma} \label{PGi} Fix an integer $i \in \{1,2,...,c\}$, an enumeration of the subnetworks. Suppose that $\{\vec{x}_k\}$ is a sequence in $\mathbb{C}^d$ such that
\[
\lVert\vec{x}_k \rVert \le 1 \text{ and } \lim _{k \to \infty} \lVert P_{G_i}^{\vec{\omega}} \vec{x}_k \rVert = 1.
\]
It follows that
\[ \lim _ {k \to \infty} (I-P_{G_i}^{\vec{\omega}})\vec{x}_k = \vec{0}.
\]
\end{lemma}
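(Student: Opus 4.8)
The plan is to obtain this as a direct consequence of Lemma \ref{genEig}, applied with $\mathcal{H} = \mathbb{C}^d$, with closed subspace $\mathcal{K} = \mathrm{span}\{\vec{a}_u : u \in G_i\}^{\perp}$ so that $\mathcal{K}^{\perp} = \mathrm{span}\{\vec{a}_u : u \in G_i\}$, and with $U = P_{G_i}^{\vec{\omega}}$. All of the work then reduces to checking the three structural hypotheses of that lemma; once they hold, the limiting conclusion transfers verbatim to the given sequence $\{\vec{x}_k\}$, since by assumption $\|\vec{x}_k\| \le 1$ and $\|U\vec{x}_k\| = \|P_{G_i}^{\vec{\omega}}\vec{x}_k\| \to 1$.

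Hypothesis (1), that $U$ fixes $\mathcal{K}$ pointwise, is the computation already performed in the proof of Lemma \ref{contractions}: if $\vec{x} \perp \vec{a}_u$ then $P_u^{\vec{\omega}}\vec{x} = \vec{x} - \omega_u \tfrac{\vec{a}_u^*\vec{x}}{\|\vec{a}_u\|^2}\vec{a}_u = \vec{x}$, so for $\vec{x} \in \mathcal{K}$ every factor $P_x^{\vec{\omega}}$ with $x \in G_i$ appearing in each composition $P_{\ell_{i,j}}^{\vec{\omega}}\cdots P_{r_{i,j}}^{\vec{\omega}}$ acts as the identity, and hence so does the weighted average $P_{G_i}^{\vec{\omega}}$ defined in \eqref{eq15}. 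For hypothesis (2), note that $P_u^{\vec{\omega}}\vec{x}$ differs from $\vec{x}$ by a scalar multiple of $\vec{a}_u \in \mathrm{span}\{\vec{a}_u : u \in G_i\} = \mathcal{K}^{\perp}$; thus each $P_u^{\vec{\omega}}$ maps $\mathcal{K}^{\perp}$ into itself, and invariance is inherited by compositions and by the convex combination defining $P_{G_i}^{\vec{\omega}}$, so $U(\mathcal{K}^{\perp}) \subseteq \mathcal{K}^{\perp}$. Finally, hypothesis (3) is precisely the admissibility condition: $\|U|_{\mathcal{K}^{\perp}}\| = \sup\{\|P_{G_i}^{\vec{\omega}}\vec{x}\| : \vec{x} \in \mathrm{span}\{\vec{a}_u : u \in G_i\},\ \|\vec{x}\| \le 1\} \le \alpha_i < 1$ by Definition \ref{D:admissible}(2).

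There is no serious obstacle here; the substantive content has been front-loaded into Lemma \ref{genEig} and into the admissibility hypothesis. The only mildly delicate bookkeeping is confirming that $\mathrm{span}\{\vec{a}_u : u \in G_i\}$ and its orthogonal complement genuinely play the roles of $\mathcal{K}^{\perp}$ and $\mathcal{K}$, so that ``$P_{G_i}^{\vec{\omega}}$ is a strict contraction on the span'' and ``$P_{G_i}^{\vec{\omega}}$ fixes the orthocomplement'' line up exactly with conditions (3) and (1). Once that identification is in place, Lemma \ref{genEig} yields $(I - P_{G_i}^{\vec{\omega}})\vec{x}_k \to \vec{0}$ immediately.
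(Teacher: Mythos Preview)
Your proposal is correct and matches the paper's own proof essentially line for line: the paper also sets $\mathcal{K} = \{\vec{a}_u : u \in G_i\}^{\perp}$ and reduces everything to verifying the three hypotheses of Lemma~\ref{genEig}, citing Lemma~\ref{contractions} for conditions (1) and (2) and the admissibility assumption for condition (3). Your write-up actually spells out the invariance of $\mathcal{K}^{\perp}$ a bit more explicitly than the paper does, but the argument is the same.
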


\begin{proof}
Let $\mathcal{K} = \left\{\vec{a}_u\,:\,u\in G_i\right\}^\perp$. The proof consists of simply verifying that $P_{G_i}^{\vec{\omega}}$ satisfies the conditions of Lemma \ref{genEig}.

As observed in Lemma \ref{contractions}, we have that $P^{\vec{\omega}}_{G_i}\vec{x} = \vec{x}$ for every $\vec{x} \in \mathcal{K}$ and that $\mathcal{K}^\perp$ is an invariant subspace for $P^{\vec{\omega}}_{G_i}$. Condition (3) of Lemma \ref{genEig} follows from the assumptions on the relaxation parameters, specifically $\|P^{\vec{\omega}}_{G_i}\vec{x}\| \leq \alpha_i\|\vec{x}\|$ for every $\vec{x} \in \mathcal{K}^\perp$.

\end{proof}

\begin{lemma}\label{PGir}
Fix an integer $i \in \{1,2,...,c\}$, an enumeration of the subnetworks. Suppose that $\{\vec{x}_k\}$ is a sequence in $\mathbb{C}^d$ such that
\[
\|\vec{x}_k\| \leq 1 \text{ and } \lim_{k \rightarrow\infty}\|P^{\vec{\omega}}_{G_i,r}\vec{x}_k\| = 1.
\]
It follows that
\[
\lim_{k\rightarrow\infty}(I - P^{\vec{\omega}}_{G_i,r})\vec{x}_k = \vec{0}.
\]
\end{lemma}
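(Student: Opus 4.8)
The plan is to recognize $P^{\vec{\omega}}_{G_i,r}=P^{\vec{\omega}}_{G_i}P^{\vec{\omega}}_{g_i}\cdots P^{\vec{\omega}}_r$ as a finite composition of operators, each of which individually satisfies the conclusion we want, and to show that this ``peeling'' behaviour is preserved under composition. It is convenient to say that a contraction $U$ on $\mathbb{C}^d$ has \emph{property} $(\star)$ if every sequence $\{\vec{y}_k\}$ with $\|\vec{y}_k\|\le 1$ and $\lim_k\|U\vec{y}_k\|=1$ satisfies $\lim_k(I-U)\vec{y}_k=\vec{0}$. By Lemma \ref{contractions} and Lemma \ref{PGi}, $P^{\vec{\omega}}_{G_i}$ has property $(\star)$. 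Moreover, every vertex $v$ on the path from $r$ to $g_i$ lies in no subnetwork, so admissibility gives $\omega_v\in(0,2)$ and Lemma \ref{contraction} makes $P^{\vec{\omega}}_v$ a contraction; applying Lemma \ref{genEig} with $\mathcal{H}=\mathbb{C}^d$, $\mathcal{K}=\mathcal{N}(S_v)$ and $U=P^{\vec{\omega}}_v$ — noting that $\mathcal{K}^\perp=\operatorname{span}\{\vec{a}_v\}$ is $P^{\vec{\omega}}_v$-invariant and $P^{\vec{\omega}}_v$ restricts there to $(1-\omega_v)I$, of norm $|1-\omega_v|<1$ — shows $P^{\vec{\omega}}_v$ also has property $(\star)$.

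First I would record the composition step: if $A$ and $B$ are contractions with property $(\star)$, then so is $AB$. Given $\|\vec{x}_k\|\le 1$ with $\|AB\vec{x}_k\|\to 1$, set $\vec{y}_k=B\vec{x}_k$, so $\|\vec{y}_k\|\le 1$ and $\|A\vec{y}_k\|=\|AB\vec{x}_k\|\to 1$; the squeeze $\|A\vec{y}_k\|\le\|\vec{y}_k\|\le 1$ forces $\|\vec{y}_k\|\to 1$, whence property $(\star)$ for $A$ yields $(I-A)\vec{y}_k\to\vec{0}$, and property $(\star)$ for $B$ (using $\|B\vec{x}_k\|=\|\vec{y}_k\|\to 1$) yields $(I-B)\vec{x}_k\to\vec{0}$. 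Then
\[
(I-AB)\vec{x}_k=(I-B)\vec{x}_k+(I-A)B\vec{x}_k\longrightarrow\vec{0}.
\]
Iterating this over the factors (each is a contraction, and a product of contractions is a contraction), the composition $P^{\vec{\omega}}_{G_i,r}=P^{\vec{\omega}}_{G_i}P^{\vec{\omega}}_{g_i}\cdots P^{\vec{\omega}}_r$ inherits property $(\star)$, which is precisely the assertion of the lemma.

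The argument is largely bookkeeping, and the one point that needs care is the induction in the composition step: after peeling off the outermost factor one must know that the image sequence $\{B\vec{x}_k\}$ not only has norm at most $1$ but also has norm tending to $1$, so that property $(\star)$ can be reinvoked on the remaining product; this is exactly what the squeeze $\|A\vec{y}_k\|\le\|\vec{y}_k\|\le 1$ provides. A secondary point worth stating explicitly is why every vertex on the path from $r$ to $g_i$ carries a relaxation parameter in $(0,2)$: this uses that the subnetworks are pairwise disjoint and that a subnetwork, by conditions (1)--(2) of its definition, is closed under passing to descendants, so no vertex at or above $g_i$ can belong to any $G_j$, and hence the case (1) bound of Definition \ref{D:admissible} applies to all such vertices.
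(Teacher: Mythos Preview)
Your argument is correct. The composition lemma you isolate (contractions with property $(\star)$ are closed under composition) is clean, and your verification that each $P^{\vec{\omega}}_v$ along the path has property $(\star)$ via Lemma~\ref{genEig} with $\mathcal{K}=\mathcal{N}(S_v)$ is fine; the justification that those path vertices lie in no subnetwork, and hence carry $\omega_v\in(0,2)$, is also sound.

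The paper proceeds differently, though. It uses the same telescoping identity $(I-AB)=(I-B)+(I-A)B$ once, with $A=P^{\vec{\omega}}_{G_i}$ and $B=P^{\vec{\omega}}_{g_i}\cdots P^{\vec{\omega}}_r$, disposes of the $A$-piece via Lemma~\ref{PGi}, but then does \emph{not} peel $B$ further. Instead it applies Lemma~\ref{genEig} directly to the full product $B$, taking $\mathcal{K}=\{\vec{a}_{u_j}:1\le j\le n\}^\perp$ and establishing condition~(3) by a compactness argument: if $\|B|_{\mathcal{K}^\perp}\|=1$, a unit vector in the (finite-dimensional) sphere of $\mathcal{K}^\perp$ attains the norm, and repeated use of Lemma~\ref{contraction} forces that vector into $\mathcal{K}$, a contradiction. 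So the paper trades your induction for a single application of Lemma~\ref{genEig} plus a compactness step; your route avoids compactness entirely and is more modular (and would work verbatim in infinite dimensions), at the cost of a short abstract lemma about property $(\star)$ under composition.
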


\begin{proof}
Note that
\begin{align*}
(I - P^{\vec{\omega}}_{G_i}P^{\vec{\omega}}_{g_i}\cdots P^{\vec{\omega}}_r)\vec{x}_k = (I - P^{\vec{\omega}}_{g_i}\cdots P^{\vec{\omega}}_r)\vec{x}_k + (I - P^{\vec{\omega}}_{G_i})P^{\vec{\omega}}_{g_i}\cdots P^{\vec{\omega}}_r\vec{x}_k.
\end{align*}
Since $\|P^{\vec{\omega}}_{g_i}\cdots P^{\vec{\omega}}_r\vec{x}_k\| \leq 1$, we have $(I - P^{\vec{\omega}}_{G_i})P^{\vec{\omega}}_{g_i}\cdots P^{\vec{\omega}}_r\vec{x}_k \rightarrow \vec{0}$ from Lemma \ref{PGi}. Hence it suffices to show $(I - P^{\vec{\omega}}_{g_i}\cdots P^{\vec{\omega}}_r)\vec{x}_k \rightarrow \vec{0}$. Consider the path from $r$ to $g_i$, say $r = u_1 \rightarrow u_2 \rightarrow ... \rightarrow u_n = g_i$, and let $\mathcal{K} = \{\vec{a}_{u_j}\,:\,1\leq j\leq n\}^\perp$. We check Lemma \ref{genEig}. Conditions (1) and (2) are straightforward to check, so we only show condition (3). Assume by way of contradiction that $\|P^{\vec{\omega}}_{g_i}\cdots P^{\vec{\omega}}_r|_{\mathcal{K}^\perp}\| = 1$. By continuity and compactness, there then exists a unit vector $\vec{x} \in \mathcal{K}^\perp$ such that $\|P^{\vec{\omega}}_{g_i}\cdots P^{\vec{\omega}}_r\vec{x}\| = 1$. From this observation and Lemma \ref{contraction}, it follows that $\vec{x} \in \mathcal{K}$ so that $\vec{x} = \vec{0}$, which is a contradiction.

\end{proof}

\begin{lemma} \label{Lemma3.6}
Suppose that $\{\vec{x}_k\}$ is a sequence in $\mathbb{C}^d$ such that 
\[
\lVert \vec{x}_k \rVert \le 1 \text{ and } \lim_{k\to \infty} \lVert P^{\vec{\omega}}\vec{x}_k \rVert = 1.
\]
It follows that
\[
\lim_{k \to \infty} (I - P^{\vec{\omega}})\vec{x}_k = \vec{0}.
\]
\end{lemma}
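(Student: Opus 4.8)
The plan is to mimic the structure of Lemmas \ref{PGi} and \ref{PGir}: since $P^{\vec{\omega}} = \sum_{i=1}^c w(r,g_i)P^{\vec{\omega}}_{G_i,r}$ is a convex combination of the contractions $P^{\vec{\omega}}_{G_i,r}$ (by Lemma \ref{contractions}), the condition $\|P^{\vec{\omega}}\vec{x}_k\| \to 1$ with $\|\vec{x}_k\| \le 1$ should force each summand to "line up" — that is, each $\|P^{\vec{\omega}}_{G_i,r}\vec{x}_k\| \to 1$ and moreover the images $P^{\vec{\omega}}_{G_i,r}\vec{x}_k$ must become asymptotically parallel with matching norms. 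First I would record the strict-convexity fact for the unit ball of $\mathbb{C}^d$: if $\vec{y}_1,\dots,\vec{y}_c$ are vectors with $\|\vec{y}_i\| \le 1$ and $\|\sum_i w_i \vec{y}_i\| \to 1$ where $\sum_i w_i = 1$ and each $w_i > 0$, then $\|\vec{y}_i - \vec{y}_j\| \to 0$ for all $i,j$ (equivalently $\vec{y}_i - \sum_j w_j\vec{y}_j \to \vec{0}$) and $\|\vec{y}_i\| \to 1$. This is a compactness/uniform-convexity argument: along any subsequence one can extract convergent $\vec{y}_i \to \vec{z}_i$, and $\|\sum w_i \vec{z}_i\| = 1$ with $\|\vec{z}_i\| \le 1$ forces all $\vec{z}_i$ equal (and of norm $1$) since the Euclidean ball is strictly convex.

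Applying this with $\vec{y}_i = P^{\vec{\omega}}_{G_i,r}\vec{x}_k$ (legitimate because each is a contraction, so $\|\vec{y}_i\| \le \|\vec{x}_k\| \le 1$) gives $\|P^{\vec{\omega}}_{G_i,r}\vec{x}_k\| \to 1$ for each $i$. By Lemma \ref{PGir} this yields $(I - P^{\vec{\omega}}_{G_i,r})\vec{x}_k \to \vec{0}$, i.e. $P^{\vec{\omega}}_{G_i,r}\vec{x}_k - \vec{x}_k \to \vec{0}$ for every $i$. Then
\[
(I - P^{\vec{\omega}})\vec{x}_k = \sum_{i=1}^c w(r,g_i)\bigl(\vec{x}_k - P^{\vec{\omega}}_{G_i,r}\vec{x}_k\bigr) = \sum_{i=1}^c w(r,g_i)\bigl(I - P^{\vec{\omega}}_{G_i,r}\bigr)\vec{x}_k \to \vec{0},
\]
using $\sum_i w(r,g_i) = 1$ (which follows from \eqref{eq:sumweights1} and \eqref{eq5} applied along the paths to the $g_i$, together with the assumption that the subnetworks partition the leaves). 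This completes the proof.

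The main obstacle is the strict-convexity step in the right generality: one wants it as a statement about sequences, not single configurations, so the cleanest route is the subsequence extraction sketched above — assume for contradiction that $(I-P^{\vec{\omega}})\vec{x}_k \not\to \vec{0}$, pass to a subsequence on which $\|(I-P^{\vec{\omega}})\vec{x}_k\|$ stays bounded below and (by compactness of the unit ball) on which each $P^{\vec{\omega}}_{G_i,r}\vec{x}_k$ converges, and derive a contradiction with strict convexity. One minor subtlety worth checking is that we only need the $\vec{y}_i$ to be convergent along the subsequence, not the $\vec{x}_k$ themselves; this is why the argument is phrased via Lemma \ref{PGir} rather than trying to prove $\vec{x}_k$ itself converges. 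A secondary, purely bookkeeping, point is to justify $\sum_{i=1}^c w(r,g_i) = 1$, which I would dispatch in one line from the weight axioms and the standing assumption that every leaf lies in exactly one subnetwork.
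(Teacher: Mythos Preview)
Your proof is correct and follows the same structure as the paper's: decompose $(I-P^{\vec{\omega}})\vec{x}_k$ as $\sum_i w(r,g_i)(I-P^{\vec{\omega}}_{G_i,r})\vec{x}_k$, show each $\|P^{\vec{\omega}}_{G_i,r}\vec{x}_k\| \to 1$, and invoke Lemma~\ref{PGir}. The only difference is that your strict-convexity/compactness argument is heavier than needed --- the paper gets $\|P^{\vec{\omega}}_{G_i,r}\vec{x}_k\| \to 1$ directly from the triangle inequality $1 = \lim \|P^{\vec{\omega}}\vec{x}_k\| \le \liminf \sum_i w(r,g_i)\|P^{\vec{\omega}}_{G_i,r}\vec{x}_k\| \le 1$ together with the elementary fact that a convex combination of numbers bounded by $1$ can tend to $1$ only if each term does (the extra ``asymptotic parallelism'' you extract is never used).
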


\begin{proof}
Recalling Equation \ref{Pomega}, we note that
\[
(I-P^{\vec{\omega}})\vec{x}_k = \sum_{i=1}^c w(r,g_i)(I-P^{\vec{\omega}}_{G_i,r})\vec{x}_k.
\]
Therefore it suffices to show that the hypotheses of Lemma \ref{PGir} are satisfied. From Lemma \ref{contractions}, we have $\lVert P_{G_i,r}^{\vec{\omega}}\vec{x}_k \rVert \le 1$ and, thus,
\[
1 = \lim_{k\to\infty}\lVert P^{\vec{\omega}}\vec{x}_k \rVert \le \lim\inf\sum _{i=1}^c w(r,g_i) \lVert P_{G_i,r}^{\vec{\omega}}\vec{x}_k \rVert \le 1.
\]
It follows that, for each $i \in \{1,2,...,c\}$,
\[
\lim _{k \to \infty} \lVert P_{G_i,r}^{\vec{\omega}}\vec{x}_k \rVert = 1.
\]

\end{proof}

\begin{proposition}\label{inconsistentlemma1}
If $\|P^{\vec{\omega}}\vec{x}\| = \|\vec{x}\|$, then $\vec{x} \in \mathcal{R}(A^*)^\perp$.
\end{proposition}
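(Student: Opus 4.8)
The plan is to peel the equality $\|P^{\vec{\omega}}\vec{x}\| = \|\vec{x}\|$ apart through the three layers that build up $P^{\vec{\omega}}$: the convex combination over the subnetworks in Equation \ref{Pomega}, the composition $P^{\vec{\omega}}_{G_i}P^{\vec{\omega}}_{g_i}\cdots P^{\vec{\omega}}_r$ in Equation \ref{eq16}, and finally the subnetwork operator $P^{\vec{\omega}}_{G_i}$ itself. At each layer the governing observation is that a convex combination, or a composition, of contractions can fix the norm of a vector only if every ingredient fixes the norm of the vector it is applied to; the subnetwork layer is the exception, where the parameters may be large and must instead be handled through the admissibility estimate in Definition \ref{D:admissible}.

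First I would use that each $P^{\vec{\omega}}_{G_i,r}$ is a contraction (Lemma \ref{contractions}) and that the weights $w(r,g_i)$ are positive with sum $1$. Then
\[
\|\vec{x}\| = \|P^{\vec{\omega}}\vec{x}\| \le \sum_{i=1}^{c} w(r,g_i)\,\|P^{\vec{\omega}}_{G_i,r}\vec{x}\| \le \sum_{i=1}^{c} w(r,g_i)\,\|\vec{x}\| = \|\vec{x}\|,
\]
which forces $\|P^{\vec{\omega}}_{G_i,r}\vec{x}\| = \|\vec{x}\|$ for every $i$. Fixing $i$ and writing $P^{\vec{\omega}}_{G_i,r} = P^{\vec{\omega}}_{G_i}P^{\vec{\omega}}_{g_i}\cdots P^{\vec{\omega}}_r$ as a composition of contractions (each factor being a contraction by Lemma \ref{contraction} or Lemma \ref{contractions}), the monotonicity of norms under successive contractions shows that every partial composition applied to $\vec{x}$ still has norm $\|\vec{x}\|$, and that each factor preserves the norm of the vector it receives.

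Next I would walk along the path $r = u_1 \to u_2 \to \cdots \to u_n = g_i$ one vertex at a time. The vertices on this path lie outside every subnetwork: a proper ancestor of $g_i$ inside some $G_\ell$ would, by the closure condition (2) on subnetworks applied along the path down to $g_i$, force $g_i \in G_\ell$ and hence (using that the top vertices of $G_i$ are children of $g_i$ and the subnetworks are disjoint) a contradiction, while the root lies in no subnetwork. Thus $\omega_{u_j} \in (0,2)$ for each $j$, and Lemma \ref{contraction} applies: preservation of the norm at $u_j$ forces $P^{\vec{\omega}}_{u_j}$ to fix the incoming vector, which by induction is $\vec{x}$ itself, so $P^{\vec{\omega}}_{u_j}\vec{x} = \vec{x}$, i.e.\ $\vec{x} \perp \vec{a}_{u_j}$. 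Consequently $P^{\vec{\omega}}_{g_i}\cdots P^{\vec{\omega}}_r\vec{x} = \vec{x}$, and therefore $\|P^{\vec{\omega}}_{G_i}\vec{x}\| = \|\vec{x}\|$.

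Finally I would treat the subnetwork layer with admissibility rather than vertex by vertex. Set $\mathcal{K}_i = \{\vec{a}_u : u \in G_i\}^\perp$; as in the proof of Lemma \ref{contractions}, $P^{\vec{\omega}}_{G_i}$ fixes $\mathcal{K}_i$ pointwise and leaves $\mathcal{K}_i^\perp = \mathrm{span}\{\vec{a}_u : u \in G_i\}$ invariant. Decomposing $\vec{x} = \vec{y} + \vec{z}$ with $\vec{y} \in \mathcal{K}_i^\perp$ and $\vec{z} \in \mathcal{K}_i$, Definition \ref{D:admissible}(2) gives
\[
\|P^{\vec{\omega}}_{G_i}\vec{x}\|^2 = \|P^{\vec{\omega}}_{G_i}\vec{y}\|^2 + \|\vec{z}\|^2 \le \alpha_i^2\|\vec{y}\|^2 + \|\vec{z}\|^2,
\]
which can equal $\|\vec{x}\|^2$ only if $\vec{y} = \vec{0}$ since $\alpha_i < 1$; hence $\vec{x} \in \mathcal{K}_i$, i.e.\ $\vec{x} \perp \vec{a}_u$ for all $u \in G_i$. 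Every vertex of the tree lies on the path from $r$ to some $g_i$ or inside some $G_i$ (any vertex has a leaf descendant, that leaf belongs to some $G_i$, and the root-to-leaf path first reaches $g_i$ and then stays inside $G_i$), so $\vec{x} \perp \vec{a}_v$ for every vertex $v$, that is $\vec{x} \in \mathcal{R}(A^*)^\perp$. I expect the main obstacle to be the bookkeeping in the third step — confirming that the relaxation parameters along the root-to-$g_i$ paths really do lie in $(0,2)$ so that Lemma \ref{contraction} is applicable there, and that the large parameters inside $G_i$ are isolated into the single operator $P^{\vec{\omega}}_{G_i}$ and handled only through admissibility; the convex-combination and composition reductions themselves are routine equality cases of the triangle inequality.
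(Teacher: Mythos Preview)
Your proof is correct and follows essentially the same route as the paper's: reduce via the convex combination to $\|P^{\vec{\omega}}_{G_i,r}\vec{x}\|=\|\vec{x}\|$, peel off the path factors $P^{\vec{\omega}}_{g_i}\cdots P^{\vec{\omega}}_r$ using Lemma \ref{contraction}, and then handle $P^{\vec{\omega}}_{G_i}$ by decomposing $\vec{x}$ orthogonally relative to $\mathrm{span}\{\vec{a}_u:u\in G_i\}$ and invoking the admissibility bound $\alpha_i<1$. Your write-up is in fact a bit more careful than the paper's in justifying that the path vertices $r,\dots,g_i$ lie outside every subnetwork (so that $\omega_{u_j}\in(0,2)$ and Lemma \ref{contraction} applies) and that every vertex of the tree is accounted for.
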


\begin{proof}
Note that
\[
\|\vec{x}\| = \left\|\sum_i w(r,g_i)P^{\vec{\omega}}_{G_i}P^{\vec{\omega}}_{g_i}...P^{\vec{\omega}}_r\vec{x}\right\| \leq \sum_i w(r,g_i)\|P^{\vec{\omega}}_{G_i}P^{\vec{\omega}}_{g_i}...P^{\vec{\omega}}_r\vec{x}\| \leq \|\vec{x}\|.
\]
Therefore it follows that $\|P^{\vec{\omega}}_{G_i}P^{\vec{\omega}}_{g_i}...P^{\vec{\omega}}_r\vec{x}\| = \|\vec{x}\|$ for all $i$. Hence $\|P^{\vec{\omega}}_r\vec{x}\| = \|\vec{x}\|$ which, by Lemma \ref{contraction}, implies that $\vec{x} \in \mathcal{N}(S_r)$ and $P^{\vec{\omega}}_r\vec{x} = \vec{x}$. We then inductively find $\vec{x} \in \mathcal{N}(S_{g_i})\cap ...\cap\mathcal{N}(S_r)$,  $P^{\vec{\omega}}_{g_i}\vec{x} = ... = P^{\vec{\omega}}_r\vec{x} = \vec{x}$, and $\|P^{\vec{\omega}}_{G_i}\vec{x}\| = \|\vec{x}\|$. Now let $P$ be the orthogonal projection onto $\{\vec{a}_u\,:\,u\in G_i\}^\perp$. Then, as argued in Lemma \ref{contractions}, we find
\begin{align*}
\|\vec{x}\|^2 &= \|P^{\vec{\omega}}_{G_i}\vec{x}\|^2 \\
&= \|P^{\vec{\omega}}_{G_i}P\vec{x} + P^{\vec{\omega}}_{G_i}(I - P)\vec{x}\|^2 \\
&= \|P\vec{x} + P^{\vec{\omega}}_{G_i}(I - P)\vec{x}\|^2 \\
&= \|P\vec{x}\|^2 + \|P^{\vec{\omega}}_{G_i}(I - P)\vec{x}\|^2 \\
&\leq \|P\vec{x}\|^2 + \alpha_i^2\|(I - P)\vec{x}\|^2 \\
&\leq \|\vec{x}\|^2.
\end{align*}
Therefore $P\vec{x} = \vec{x}$ so that $\vec{x} \in \mathcal{N}(S_u)$ for every $u \in G_i$, which concludes the proof.

\end{proof}

The next lemma is an immediate consequence of Proposition \ref{inconsistentlemma1}.

\begin{lemma} \label{Lemma3.7}
Let $\mathcal{V}$ be the collection of all of the vertices in the network. Then
\[
\mathcal{N}(I-P^{\vec{\omega}}) = \bigcap_{v \in \mathcal{V}} \mathcal{N}(I-P_v).
\]
\end{lemma}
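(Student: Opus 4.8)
The plan is to prove the two inclusions separately, using Proposition \ref{inconsistentlemma1} for the harder direction.

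For the inclusion $\bigcap_{v}\mathcal{N}(I-P_v)\subseteq\mathcal{N}(I-P^{\vec{\omega}})$, suppose $\vec{x}$ satisfies $P_v\vec{x}=\vec{x}$ for every $v\in\mathcal{V}$. Recalling that $P^{\vec{\omega}}_v=(1-\omega_v)I+\omega_vP_v$, we immediately get $P^{\vec{\omega}}_v\vec{x}=\vec{x}$ for every $v$, regardless of the value of $\omega_v$. Then, reading off the definitions \eqref{eq15}, \eqref{eq16}, and \eqref{Pomega}, each composition $P^{\vec{\omega}}_{\ell_{i,j}}\cdots P^{\vec{\omega}}_{r_{i,j}}$ fixes $\vec{x}$, hence $P^{\vec{\omega}}_{G_i}\vec{x}=\sum_j w(g_i,\ell_{i,j})\vec{x}=\vec{x}$ by \eqref{eq:sumweights1}, then $P^{\vec{\omega}}_{G_i,r}\vec{x}=\vec{x}$, and finally $P^{\vec{\omega}}\vec{x}=\sum_i w(r,g_i)\vec{x}=\vec{x}$, again using that the weights sum to $1$. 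So $\vec{x}\in\mathcal{N}(I-P^{\vec{\omega}})$.

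For the reverse inclusion $\mathcal{N}(I-P^{\vec{\omega}})\subseteq\bigcap_v\mathcal{N}(I-P_v)$, suppose $P^{\vec{\omega}}\vec{x}=\vec{x}$. Since $P^{\vec{\omega}}$ is a contraction by Lemma \ref{contractions}, this forces $\|P^{\vec{\omega}}\vec{x}\|=\|\vec{x}\|$, so Proposition \ref{inconsistentlemma1} applies. The proof of that proposition does more than its stated conclusion: it shows that equality $\|P^{\vec{\omega}}\vec{x}\|=\|\vec{x}\|$ forces $\vec{x}\in\mathcal{N}(S_v)$ for every vertex $v$ on a root-to-$g_i$ path and for every $v\in G_i$, for each $i$ — and since every vertex of the network lies either on such a path or inside some subnetwork $G_i$ (as every leaf is assumed to be in a subnetwork), this means $\vec{x}\in\mathcal{N}(S_v)$ for all $v\in\mathcal{V}$. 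Finally, $\vec{x}\in\mathcal{N}(S_v)$ means $\vec{a}_v^*\vec{x}=0$, and then \eqref{operatorP} gives $P_v\vec{x}=\vec{x}-\frac{\vec{a}_v^*\vec{x}}{\|\vec{a}_v\|^2}\vec{a}_v=\vec{x}$, i.e. $\vec{x}\in\mathcal{N}(I-P_v)$. This establishes both inclusions and hence the equality.

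The main point requiring care is making sure the reverse inclusion genuinely covers \emph{every} vertex of $\mathcal{V}$, not just the leaves or the subnetwork vertices; this is where the standing assumption that every leaf is contained in a subnetwork is used, so that $\mathcal{V}$ is exhausted by the union of the root-to-$g_i$ paths together with the $G_i$. Everything else is a direct unwinding of the definitions of $P^{\vec{\omega}}_{G_i}$, $P^{\vec{\omega}}_{G_i,r}$, and $P^{\vec{\omega}}$ and the normalization of the weights, so no genuine obstacle is expected.
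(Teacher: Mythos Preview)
Your proof is correct and matches the paper's one-line justification that the lemma is an immediate consequence of Proposition~\ref{inconsistentlemma1}. Note, though, that you need not reach inside the \emph{proof} of that proposition: its stated conclusion $\vec{x}\in\mathcal{R}(A^*)^\perp$ already says $\vec{x}\perp\vec{a}_v$ for every $v\in\mathcal{V}$, which is exactly $\vec{x}\in\bigcap_{v}\mathcal{N}(S_v)=\bigcap_{v}\mathcal{N}(I-P_v)$.
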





\begin{lemma}\label{Lemma3.8}
Let $\mathcal{V}$ be the collection of all of the vertices in the network. As $k \to \infty$, $(P^{\vec{\omega}})^k$ converges strongly to the orthogonal projection onto
\[
\bigcap_{v \in \mathcal{V}} \mathcal{N}(I-P_v) = \mathcal{N} (A).
\]
\end{lemma}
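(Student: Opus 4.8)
The plan is to split $\mathbb{C}^d$ into the fixed space $\mathcal{F}$ of $P^{\vec{\omega}}$ and its orthogonal complement, verify that this splitting is $P^{\vec{\omega}}$-invariant, and then show that iterates of $P^{\vec{\omega}}$ drive $\mathcal{F}^\perp$ to $\vec{0}$ while fixing $\mathcal{F}$ pointwise. First I would set $\mathcal{F} := \mathcal{N}(I-P^{\vec{\omega}})$ and identify it: since $P_v$ is the orthogonal projection onto $\mathcal{N}(S_v)$, a vector is fixed by $P_v$ exactly when $\vec{a}_v^*\vec{x}=0$, so $\bigcap_{v\in\mathcal{V}}\mathcal{N}(I-P_v)=\bigcap_{v\in\mathcal{V}}\mathcal{N}(S_v)=\mathcal{N}(A)$; combined with Lemma \ref{Lemma3.7}, this gives $\mathcal{F}=\mathcal{N}(A)$, the subspace named in the statement.

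The heart of the argument is showing $\mathcal{F}^\perp$ is $P^{\vec{\omega}}$-invariant. By Lemma \ref{contractions}, $P^{\vec{\omega}}$ is a contraction, and for any contraction $T$ on a Hilbert space one has $\mathcal{N}(I-T)=\mathcal{N}(I-T^*)$: if $T\vec{x}=\vec{x}$, then $\|\vec{x}\|^2=\langle\vec{x},T^*\vec{x}\rangle\le\|\vec{x}\|\,\|T^*\vec{x}\|\le\|\vec{x}\|^2$, and equality in Cauchy--Schwarz forces $T^*\vec{x}=\vec{x}$. Thus $(P^{\vec{\omega}})^*$ fixes $\mathcal{F}$ pointwise, so for $\vec{v}\in\mathcal{F}^\perp$ and $\vec{f}\in\mathcal{F}$ I would compute $\langle P^{\vec{\omega}}\vec{v},\vec{f}\rangle=\langle\vec{v},(P^{\vec{\omega}})^*\vec{f}\rangle=\langle\vec{v},\vec{f}\rangle=0$, so $P^{\vec{\omega}}\vec{v}\in\mathcal{F}^\perp$. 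Since also $P^{\vec{\omega}}|_{\mathcal{F}}=I$, this produces the $P^{\vec{\omega}}$-invariant orthogonal decomposition $\mathbb{C}^d=\mathcal{F}\oplus\mathcal{F}^\perp$.

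Next I would show $(P^{\vec{\omega}})^k\vec{w}\to\vec{0}$ for every $\vec{w}\in\mathcal{F}^\perp$. Taking $\vec{w}\neq\vec{0}$, set $\vec{y}_k:=(P^{\vec{\omega}})^k\vec{w}\in\mathcal{F}^\perp$; since $P^{\vec{\omega}}$ is a contraction, $\|\vec{y}_k\|$ decreases to some $L\ge 0$, and if $L>0$ then the unit vectors $\vec{z}_k:=\vec{y}_k/\|\vec{y}_k\|$ satisfy $\|P^{\vec{\omega}}\vec{z}_k\|=\|\vec{y}_{k+1}\|/\|\vec{y}_k\|\to 1$, so Lemma \ref{Lemma3.6} gives $(I-P^{\vec{\omega}})\vec{z}_k\to\vec{0}$; passing to a convergent subsequence $\vec{z}_{k_j}\to\vec{z}_*$ (the unit sphere of the finite-dimensional $\mathcal{F}^\perp$ is compact) gives $\vec{z}_*\in\mathcal{F}^\perp$, $\|\vec{z}_*\|=1$, and $P^{\vec{\omega}}\vec{z}_*=\vec{z}_*$, hence $\vec{z}_*\in\mathcal{F}\cap\mathcal{F}^\perp=\{\vec{0}\}$, a contradiction. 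So $L=0$. To finish, for arbitrary $\vec{x}$ I would decompose $\vec{x}=P_{\mathcal{F}}\vec{x}+(I-P_{\mathcal{F}})\vec{x}$ (with $P_{\mathcal{F}}$ the orthogonal projection onto $\mathcal{F}=\mathcal{N}(A)$ and $(I-P_{\mathcal{F}})\vec{x}\in\mathcal{F}^\perp$) and use $P^{\vec{\omega}}|_{\mathcal{F}}=I$ to get $(P^{\vec{\omega}})^k\vec{x}=P_{\mathcal{F}}\vec{x}+(P^{\vec{\omega}})^k(I-P_{\mathcal{F}})\vec{x}\to P_{\mathcal{F}}\vec{x}$, i.e. $(P^{\vec{\omega}})^k\to P_{\mathcal{F}}$ strongly.

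The hard part is the invariance of $\mathcal{F}^\perp$ above, i.e. that the limiting projection is \emph{orthogonal} rather than merely oblique: $P^{\vec{\omega}}$ is generally not self-adjoint (it is a weighted average of long compositions of the operators $P^{\vec{\omega}}_v$), so this does not come for free and must be extracted from the contraction identity $\mathcal{N}(I-T)=\mathcal{N}(I-T^*)$. An alternative would be to use Proposition \ref{inconsistentlemma1} together with a Jordan-form argument to show $1$ is the only unimodular eigenvalue of $P^{\vec{\omega}}$ and carries no nontrivial Jordan block, but one would still have to argue the splitting is orthogonal, so the route above is the more economical one; everything else is a routine monotonicity-and-compactness argument resting on Lemma \ref{Lemma3.6}.
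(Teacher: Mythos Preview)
Your proof is correct and is precisely the standard argument the paper invokes by citing Lemma~3.5 in Natterer: you use Lemmas~\ref{Lemma3.6} and~\ref{Lemma3.7} together with the contraction identity $\mathcal{N}(I-T)=\mathcal{N}(I-T^*)$ to obtain the $P^{\vec{\omega}}$-invariant orthogonal splitting $\mathcal{N}(A)\oplus\mathcal{N}(A)^\perp$, and then a monotonicity/compactness argument on $\mathcal{N}(A)^\perp$. The paper simply defers to Natterer for these details, so your write-up is a faithful expansion of what the paper intends.
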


\begin{proof} 
Using Lemmas \ref{Lemma3.6} and \ref{Lemma3.7} with the observation that $\mathcal{N}(S_v) = \mathcal{N}(I - P_v)$, the proof is identical to the proof of Lemma 3.5 in \cite{Natterer2001}.

\end{proof}

\begin{theorem} \label{Theorem3.1}
If the system of equations $A\vec{x} = \vec{b}$ is consistent, then the sequence of estimates $\{\vec{x}^{(n)}\}$ from the distributed Kaczmarz algorithm given by the recursion
\[
\vec{x}^{(n+1)} = Q^{\vec{\omega}}\vec{x}^{(n)} = \sum _ {\ell \in \mathcal{L}} w(r,\ell) Q^{\vec{\omega}}_\ell\cdots Q^{\vec{\omega}}_r \vec{x}^{(n)},
\]
with admissible relaxation parameters, converges to the solution of minimal norm provided that the initial estimate $\vec{x}^{(0)} \in \mathcal{R}(A^*)$.
\end{theorem}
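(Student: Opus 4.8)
The plan is to reduce the theorem to the strong convergence of $(P^{\vec\omega})^n$ proved in Lemma \ref{Lemma3.8} by exploiting the affine structure of the iteration. The first and only delicate step is to show that $Q^{\vec\omega}$ is an affine operator whose linear part is precisely $P^{\vec\omega}$. Writing each node map as $Q^{\vec\omega}_v = P^{\vec\omega}_v(\cdot) + \omega_v\vec h_v$ via Equation \ref{QandP}, the composition $Q^{\vec\omega}_\ell\cdots Q^{\vec\omega}_r$ along a root-to-leaf path has linear part $P^{\vec\omega}_\ell\cdots P^{\vec\omega}_r$ and some fixed affine part, and averaging over the leaves yields $Q^{\vec\omega}\vec x = \big(\sum_{\ell\in\mathcal L}w(r,\ell)P^{\vec\omega}_\ell\cdots P^{\vec\omega}_r\big)\vec x + \vec c$ for a fixed vector $\vec c = \vec c(\vec\omega,\vec b)$. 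It then remains to identify $\sum_{\ell\in\mathcal L}w(r,\ell)P^{\vec\omega}_\ell\cdots P^{\vec\omega}_r$ with the operator $P^{\vec\omega}$ of Equation \ref{Pomega}; this is bookkeeping with the subnetwork decomposition, using the multiplicativity \ref{eq5} of the weights and the fact that each leaf lies in a unique subnetwork $G_i$ whose preceding vertex $g_i$ satisfies $g_i \to r_{i,j} \preceq \ell_{i,j}$ (so no node operators are dropped in passing from $P^{\vec\omega}_{G_i,r}$ to a product along the full path). I expect this identification to be the main obstacle; everything afterward is routine.

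Next I would observe that the minimal-norm solution $\vec x^\dagger$ of $A\vec x = \vec b$ --- the unique solution lying in $\mathcal R(A^*) = \mathcal N(A)^\perp$ --- is a fixed point of $Q^{\vec\omega}$. Indeed, $\vec x^\dagger$ lies on each hyperplane $S_v\vec x = b_v$, so $Q_v\vec x^\dagger = \vec x^\dagger$ and hence $Q^{\vec\omega}_v\vec x^\dagger = (1-\omega_v)\vec x^\dagger + \omega_v Q_v\vec x^\dagger = \vec x^\dagger$ for every vertex $v$; composing along each path and averaging gives $Q^{\vec\omega}\vec x^\dagger = \vec x^\dagger$ (equivalently $\vec c = \vec x^\dagger - P^{\vec\omega}\vec x^\dagger$). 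Subtracting this fixed-point identity from the recursion and using the first step, the error satisfies $\vec x^{(n)} - \vec x^\dagger = Q^{\vec\omega}\vec x^{(n-1)} - Q^{\vec\omega}\vec x^\dagger = P^{\vec\omega}\big(\vec x^{(n-1)} - \vec x^\dagger\big)$, so by induction $\vec x^{(n)} - \vec x^\dagger = (P^{\vec\omega})^n\big(\vec x^{(0)} - \vec x^\dagger\big)$.

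Finally, Lemma \ref{Lemma3.8} gives that $(P^{\vec\omega})^n$ converges strongly to the orthogonal projection $P_{\mathcal N(A)}$ onto $\mathcal N(A)$, so $\vec x^{(n)} \to \vec x^\dagger + P_{\mathcal N(A)}\big(\vec x^{(0)} - \vec x^\dagger\big)$. Since both $\vec x^{(0)}$ (by hypothesis) and $\vec x^\dagger$ lie in $\mathcal R(A^*) = \mathcal N(A)^\perp$, their difference is orthogonal to $\mathcal N(A)$, whence $P_{\mathcal N(A)}\big(\vec x^{(0)} - \vec x^\dagger\big) = \vec 0$ and $\vec x^{(n)} \to \vec x^\dagger$, as claimed. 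Because $\vec x^\dagger$ does not depend on the relaxation parameters, neither does the limit, which also yields the parameter-independence asserted in the introduction.
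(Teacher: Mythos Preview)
Your proposal is correct and follows essentially the same route as the paper: use the affine relation $Q^{\vec\omega}_v = P^{\vec\omega}_v + \omega_v\vec h_v$ to recognize that any solution is a fixed point of $Q^{\vec\omega}$, deduce that the error $\vec x^{(n)} - \vec x$ evolves linearly under $P^{\vec\omega}$, and finish via Lemma~\ref{Lemma3.8}. The only cosmetic differences are that the paper subtracts off an \emph{arbitrary} solution $\vec x$ and identifies the limit as $(I-T)\vec x$ (where $T$ is the projection onto $\mathcal N(A)$), whereas you subtract the minimal-norm solution $\vec x^\dagger$ from the outset; and the paper passes silently over the identification $\sum_{\ell\in\mathcal L}w(r,\ell)P^{\vec\omega}_\ell\cdots P^{\vec\omega}_r = P^{\vec\omega}$ that you (rightly) flag as the bookkeeping step.
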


\begin{proof}
Let $\vec{x}$ be a solution to the system of equations, and let $v$ be any vertex in the network. Then, from Equation \ref{QandP}, we have
\[
\vec{x} = Q_v^{\vec{\omega}}\vec{x} = P^{\vec{\omega}}_v\vec{x} + \omega_v\vec{h}_v.
\]
Let $\vec{y}$ be an arbitrary vector. From Equation \ref{QandP}, again, we find
\[
Q_v^{\vec{\omega}}\vec{y} = P_v^{\vec{\omega}}\vec{y} + \omega_v\vec{h}_v = P_v^{\vec{\omega}}(\vec{y} - \vec{x}) + \vec{x}.
\]
It then immediately follows from this last identity that
\begin{align*}
Q^{\vec{\omega}}\vec{y} &= \sum_{\ell \in \mathcal{L}}w(r,\ell)Q^{\vec{\omega}}_{\ell}\cdots Q^{\vec{\omega}}_r\vec{y} \\
&= \left(\sum_{\ell \in \mathcal{L}}w(r,\ell)P^{\vec{\omega}}_\ell\cdots P^{\vec{\omega}}_r(\vec{y} - \vec{x})\right) + \vec{x} \\
&= P^{\vec{\omega}}(\vec{y} - \vec{x}) + \vec{x}.
\end{align*}
Further, for every positive integer $k$,
\[
(Q^{\vec{\omega}})^k\vec{y} = (P^{\vec{\omega}})^k(\vec{y} - \vec{x}) + \vec{x}.
\]
Then, from Lemma \ref{Lemma3.8}, we have that
\[
(Q^{\vec{\omega}})^k \vec{y} \to T(\vec{y}-\vec{x}) + \vec{x}
\]
where $T$ is the orthogonal projection onto $\mathcal{N}(A)$. Now, if $\vec{y} = \vec{x}^{(0)} \in \mathcal{R}(A^*)$, then $T(\vec{y} - \vec{x}) + \vec{x} = (I - T)\vec{x}$ is the solution of minimal norm, which concludes the proof.

\end{proof}
\section{Inconsistent Systems}\label{inconsistent}
In this section, we show that the distributed Kaczmarz algorithm with admissible relaxation parameters converges regardless of the consistency of the system and that the limit point is an approximation of a weighted least-squares solution when the system of equations is inconsistent. We first develop the relevant theory by following Successive Over-Relaxation (SOR) analysis of the Kaczmarz algorithm as developed in \cite{Natterer2001}.

Let $\ell \in \mathcal{L}$, and suppose $r = u_1 \to u_2 \to \cdots \to u_{p-1} \to u_p = \ell$, the path from $r$ to $\ell$. We denote the initial estimate at $r$ by $\vec{x}_{u_0}$. Then, from the Kaczmarz update, we recursively attain $\vec{x}_{u_j}$, the relaxed projection of $\vec{x}_{u_{j-1}}$ onto the hyperplane given by $\vec{a}_{u_j}^*\vec{x} = b_{u_j}$,
\begin{align}\label{relaxedprojection}
\vec{x}_{u_j} = Q_{u_j}^{\vec{\omega}}\vec{x}_{u_{j-1}} = \vec{x}_{u_{j-1}} + \omega_{u_j}\frac{b_{u_j} - \vec{a}_{u_j}^*\vec{x}_{u_{j-1}}}{\|\vec{a}_{u_j}\|^2}\vec{a}_{u_j}.
\end{align}
Hence, there exist complex scalars $\{c_k\}_{k=1}^p$ such that, for all $j$,
\begin{align}\label{sumrepresentation}
\vec{x}_{u_j} = \vec{x}_{u_0} + \sum_{k=1}^jc_k\vec{a}_{u_k}.
\end{align}
Substituting Equation \ref{sumrepresentation} into Equation \ref{relaxedprojection},
\begin{align}\label{coefficients}
c_j = \omega_{u_j}\dfrac{b_{u_j} - \vec{a}_{u_j}^*\vec{x}_{u_0} - \sum\limits_{k = 1}^{j-1}c_k\vec{a}_{u_j}^*\vec{a}_{u_k}}{\|\vec{a}_{u_j}\|^2}.
\end{align}
We can then consolidate Equation \ref{coefficients} for all $j$ into the matrix equation
\begin{align}\label{coefficientsvector}
D_\ell\vec{c} = \Omega_\ell(\vec{b}_\ell - A_\ell\vec{x}_{u_0} - L_\ell\vec{c})
\end{align}
where $\vec{c} = (c_1,c_2,...,c_p)^T$ and $D_\ell$, $\Omega_\ell$, $\vec{b}_\ell$, $L_\ell$ and $A_\ell$ are as follows:
\[
D_\ell = \begin{pmatrix}
\|\vec{a}_{u_1}\|^2 & 0 & \hdots & 0 \\
0 & \|\vec{a}_{u_2}\|^2 & \hdots & 0 \\
\vdots & \vdots & \ddots & \vdots \\
0 & 0 & \hdots & \|\vec{a}_{u_p}\|^2
\end{pmatrix},\,\Omega_\ell = \begin{pmatrix}
\omega_{u_1} & 0 & \hdots & 0 \\
0 & \omega_{u_2} & \hdots & 0 \\
\vdots & \vdots & \ddots & \vdots \\
0 & 0 & \hdots & \omega_{u_p}
\end{pmatrix},
\]
\[
\vec{b}_{\ell} = \begin{pmatrix}
b_{u_1} \\
b_{u_2} \\
\vdots \\
b_{u_p}
\end{pmatrix},\,L_\ell = \begin{pmatrix}
0 & 0 & 0 & \hdots & 0 & 0 \\
\vec{a}_{u_2}^*\vec{a}_{u_1} & 0 & 0 & \hdots & 0 & 0 \\
\vec{a}_{u_3}^*\vec{a}_{u_1} & \vec{a}_{u_3}^*\vec{a}_{u_2} & 0 & \hdots & 0 & 0 \\
\vec{a}_{u_4}^*\vec{a}_{u_1} & \vec{a}_{u_4}^*\vec{a}_{u_2} & \vec{a}_{u_4}^*\vec{a}_{u_3} & \hdots & 0 & 0 \\
\vdots & \vdots & \vdots & \ddots & \vdots & \vdots \\
\vec{a}_{u_p}^*\vec{a}_{u_1} & \vec{a}_{u_p}^*\vec{a}_{u_2} & \vec{a}_{u_p}^*\vec{a}_{u_3} & \hdots & \vec{a}_{u_p}^*\vec{a}_{u_{p-1}} & 0
\end{pmatrix},\,A_\ell = \begin{pmatrix}
\vec{a}_{u_1}^* \\
\vec{a}_{u_2}^* \\
\vdots \\
\vec{a}_{u_p}^*
\end{pmatrix}.
\]
Altogether, from Equations \ref{sumrepresentation} and \ref{coefficientsvector}, respectively, we may express the iterate $\vec{x}_\ell^{(n)}$ at the leaf $\ell$ given the initial vector $\vec{x}^{(n)}$ at the root in terms of the scalar vector $\vec{c}$,
\begin{align*}
\vec{x}_\ell^{(n)} &= \vec{x}^{(n)} + A_\ell^*\vec{c}, \\
\vec{c} &= (D_\ell + \Omega_\ell L_\ell)^{-1}\Omega_\ell\left(\vec{b}_\ell - A_\ell\vec{x}^{(n)}\right).
\end{align*}
We eliminate the scalar vector and attain
\begin{align*}
\vec{x}_\ell^{(n)} = (I - A_\ell^*(D_\ell + \Omega_\ell L_\ell)^{-1}\Omega_\ell A_\ell)\vec{x}^{(n)} + A_\ell^*(D_\ell + \Omega_\ell L_\ell)^{-1}\Omega_\ell\vec{b}_\ell.
\end{align*}
We then aggregate the leaf operators as follows:
\[
D = \begin{pmatrix}
D_{\ell_1} & 0 & ... & 0 \\
0 & D_{\ell_2} & ... & 0 \\
\vdots & \vdots & \ddots & \vdots \\
0 & 0 & ... & D_{\ell_t}
\end{pmatrix}, \Omega = \begin{pmatrix}
\Omega_{\ell_1} & 0 & ... & 0 \\
0 & \Omega_{\ell_2} & ... & 0 \\
\vdots & \vdots & \ddots & \vdots \\
0 & 0 & ... & \Omega_{\ell_t}
\end{pmatrix},
\]
\[
\vec{\mathfrak{b}} = \begin{pmatrix}
\vec{b}_{\ell_1} \\
\vec{b}_{\ell_2} \\
\vdots \\
\vec{b}_{\ell_t}
\end{pmatrix}, L = \begin{pmatrix}
L_{\ell_1} & 0 & ... & 0 \\
0 & L_{\ell_2} & ... & 0 \\
\vdots & \vdots & \ddots & \vdots \\
0 & 0 & ... & L_{\ell_t}
\end{pmatrix}, \mathcal{A} = \begin{pmatrix}
A_{\ell_1} \\
A_{\ell_2} \\
\vdots \\
A_{\ell_t}
\end{pmatrix},
\]
\[
W = \begin{pmatrix}
w(r,\ell_1)I_{\text{dim}(\Omega_{\ell_1})} & 0 & ... & 0 \\
0 & w(r,\ell_2)I_{\text{dim}(\Omega_{\ell_2})} & ... & 0 \\
\vdots & \vdots & \ddots & \vdots \\
0 & 0 & ... & w(r,\ell_t)I_{\text{dim}(\Omega_{\ell_t})}
\end{pmatrix}.
\]
The estimate obtained from the pooling stage of the $n$th iteration can be expressed in terms of these matrices,
\begin{align}\label{groupiterate}
\vec{x}^{(n+1)} = \sum_{\ell \in \mathcal{L}}w(r,\ell)\vec{x}_\ell^{(n)} = B^{\vec{\omega}}\vec{x}^{(n)} + \vec{b}^{\vec{\omega}}
\end{align}
where
\begin{align}
B^{\vec{\omega}} &= I - \mathcal{A}^*(D + \Omega L)^{-1}W\Omega \mathcal{A}, \\
\vec{b}^{\vec{\omega}} &= \mathcal{A}^*(D + \Omega L)^{-1}W\Omega\vec{\mathfrak{b}}.
\end{align}

Note that there exists a vector $\vec{h}$ such that $Q^{\vec{\omega}}\vec{x} = P^{\vec{\omega}}\vec{x} + \vec{h}$ for every $\vec{x}$. Then, from Equation \ref{groupiterate} and the linearity of $P^{\vec{\omega}}$ and $B^{\vec{\omega}}$, we have $B^{\vec{\omega}} = P^{\vec{\omega}}$ and $\vec{b}^{\vec{\omega}} = \vec{h}$.

\begin{proposition}\label{eigenvalues}
Suppose $B^{\vec{\omega}}\vec{x} = \lambda\vec{x}$ for some $\vec{x} \neq \vec{0}$. Then $\lambda = 1$ or $|\lambda| < 1$, and
\begin{enumerate}
    \item $\lambda = 1$ if and only if $\vec{x} \in \mathcal{R}(A^*)^\perp$,
    \item $|\lambda| < 1$ if and only if $\vec{x} \in \mathcal{R}(A^*)$.
\end{enumerate}
\end{proposition}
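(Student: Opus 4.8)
The plan is to exploit that $B^{\vec{\omega}} = P^{\vec{\omega}}$, which was just observed, together with the contraction property of $P^{\vec{\omega}}$ (Lemma \ref{contractions}) and the orthogonal decomposition $\mathbb{C}^d = \mathcal{R}(A^*)\oplus\mathcal{R}(A^*)^\perp$. The first thing to record is that this decomposition reduces $P^{\vec{\omega}}$. Since $\mathcal{R}(A^*)^\perp = \bigcap_v \mathcal{N}(S_v)$, every $P_v$ fixes $\mathcal{R}(A^*)^\perp$ pointwise, hence so does every $P_v^{\vec{\omega}} = (1-\omega_v)I + \omega_v P_v$, and therefore so does $P^{\vec{\omega}}$, since it is built from the $P_v^{\vec{\omega}}$ by composition and weighted averaging; dually, each $P_v$ maps $\mathrm{span}\{\vec{a}_u : u\in\mathcal{V}\} = \mathcal{R}(A^*)$ into itself (it only subtracts a multiple of $\vec{a}_v$), so $P^{\vec{\omega}}$ leaves $\mathcal{R}(A^*)$ invariant. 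Thus $P^{\vec{\omega}}$ restricts to the identity on $\mathcal{R}(A^*)^\perp$ and to a bounded operator on $\mathcal{R}(A^*)$.

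The key quantitative step is the claim that $\rho := \|P^{\vec{\omega}}|_{\mathcal{R}(A^*)}\| < 1$. Since $\mathcal{R}(A^*)$ is finite-dimensional, its unit sphere is compact and $\vec{x}\mapsto\|P^{\vec{\omega}}\vec{x}\|$ is continuous, so the supremum defining $\rho$ is attained at some unit vector $\vec{x}\in\mathcal{R}(A^*)$. If $\rho = 1$, then $\|P^{\vec{\omega}}\vec{x}\| = \|\vec{x}\|$, so Proposition \ref{inconsistentlemma1} forces $\vec{x}\in\mathcal{R}(A^*)^\perp$, whence $\vec{x}\in\mathcal{R}(A^*)\cap\mathcal{R}(A^*)^\perp = \{\vec{0}\}$, contradicting $\|\vec{x}\| = 1$. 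Hence $\rho < 1$. I expect this to be the main (indeed essentially the only substantive) step: it is exactly where the admissibility hypothesis enters, through Proposition \ref{inconsistentlemma1}, and where finite-dimensionality is used; everything else is bookkeeping.

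To finish, let $B^{\vec{\omega}}\vec{x} = \lambda\vec{x}$ with $\vec{x}\neq\vec{0}$, and write $\vec{x} = \vec{x}_1 + \vec{x}_2$ with $\vec{x}_1\in\mathcal{R}(A^*)$ and $\vec{x}_2\in\mathcal{R}(A^*)^\perp$. Applying $P^{\vec{\omega}}$ and matching components across the orthogonal decomposition gives $P^{\vec{\omega}}\vec{x}_1 = \lambda\vec{x}_1$ and $\vec{x}_2 = \lambda\vec{x}_2$. If $\vec{x}_2\neq\vec{0}$, the second identity forces $\lambda = 1$, and then $P^{\vec{\omega}}\vec{x}_1 = \vec{x}_1$ together with $\|P^{\vec{\omega}}\vec{x}_1\|\le\rho\|\vec{x}_1\|$ forces $\vec{x}_1 = \vec{0}$, so $\vec{x} = \vec{x}_2\in\mathcal{R}(A^*)^\perp$. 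If $\vec{x}_2 = \vec{0}$, then $\vec{x} = \vec{x}_1\in\mathcal{R}(A^*)$ and $|\lambda|\,\|\vec{x}\| = \|P^{\vec{\omega}}\vec{x}\|\le\rho\|\vec{x}\|$, so $|\lambda|\le\rho < 1$. This already yields the dichotomy $\lambda = 1$ or $|\lambda| < 1$ and the forward implications of (1) and (2). The converses are immediate: if $\vec{x}\in\mathcal{R}(A^*)^\perp$ then $P^{\vec{\omega}}\vec{x} = \vec{x}$, so $\lambda = 1$; and if $\vec{x}\in\mathcal{R}(A^*)$ then $P^{\vec{\omega}}\vec{x} = \vec{x}$ is impossible for $\vec{x}\neq\vec{0}$ (as $\rho < 1$), so $\lambda\neq 1$ and hence $|\lambda| < 1$ by the dichotomy.

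Alternatively, one can short-circuit the concluding argument with Lemma \ref{Lemma3.8}: since $(P^{\vec{\omega}})^k$ converges strongly to the orthogonal projection $T$ onto $\mathcal{N}(A) = \mathcal{R}(A^*)^\perp$, applying this to an eigenvector yields $\lambda^k\vec{x}\to T\vec{x}$; convergence of $\{\lambda^k\}$ (valid since $\vec{x}\neq\vec{0}$) forces $\lambda = 1$ or $|\lambda| < 1$, while $T\vec{x} = \vec{0}$ is equivalent both to $\vec{x}\in\mathcal{R}(A^*)$ and to $|\lambda| < 1$, and $T\vec{x} = \vec{x}$ is equivalent both to $\vec{x}\in\mathcal{R}(A^*)^\perp$ and to $\lambda = 1$. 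Either route works; the first keeps the argument self-contained within the eigenvalue analysis, the second offloads the work onto the strong-convergence lemma.
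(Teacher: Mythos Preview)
Your proof is correct and follows essentially the same approach as the paper: decompose along $\mathcal{R}(A^*)\oplus\mathcal{R}(A^*)^\perp$, use that $P^{\vec{\omega}}$ acts as the identity on the second summand and preserves the first, and invoke Proposition~\ref{inconsistentlemma1} to exclude $|\lambda|=1$ on $\mathcal{R}(A^*)$. The only difference is that you first package Proposition~\ref{inconsistentlemma1} into the uniform bound $\|P^{\vec{\omega}}|_{\mathcal{R}(A^*)}\|<1$ via compactness, whereas the paper applies it directly to the eigenvector; your alternative route through Lemma~\ref{Lemma3.8} is a genuinely different (and slick) shortcut not used in the paper.
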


\begin{proof}
Suppose $P^{\vec{\omega}}\vec{x} = \lambda\vec{x}$ for some $\vec{x} \neq \vec{0}$. By Lemma \ref{contractions}, we note that $|\lambda| \leq 1$. Let $P$ be the orthogonal projection onto $\mathcal{R}(A^*)^\perp$. Then we find
\[
\lambda P\vec{x} + \lambda (I - P)\vec{x} = \lambda\vec{x} = P^{\vec{\omega}}\vec{x} = P^{\vec{\omega}}P\vec{x} + P^{\vec{\omega}}(I - P)\vec{x} = P\vec{x} + P^{\vec{\omega}}(I - P)\vec{x}.
\]
By uniqueness of the decomposition in $\mathcal{R}(A^*)\oplus\mathcal{R}(A^*)^\perp$, we have
\begin{align*}
    P\vec{x} &= \lambda P\vec{x}, \\
    P^{\vec{\omega}}(I - P)\vec{x} &= \lambda(I - P)\vec{x}.
\end{align*}
If $\lambda \neq 1$, then $P\vec{x} = \vec{0}$ so that $\vec{x} = (I - P)\vec{x} \in \mathcal{R}(A^*)$. From this observation and Proposition \ref{inconsistentlemma1}, we find that $|\lambda| < 1$. Now suppose $\lambda = 1$. Then, by Proposition \ref{inconsistentlemma1}, $(I - P)\vec{x} \in \mathcal{R}(A^*)^\perp$ so that $\vec{x} = P\vec{x} \in \mathcal{R}(A^*)^\perp$. The sufficient statement of (1) is straightforward, and (2) follows.

\end{proof}

\begin{lemma}\label{Convergence}
Let $\vec{x}^{(0)} \in \mathcal{R}(A^*)$. The sequence $\{\vec{x}^{(n)}\}$ converges to the fixed point of the mapping $\vec{x} \in \mathcal{R}(A^*) \mapsto B^{\vec{\omega}}\vec{x} + \vec{b}^{\vec{\omega}}$. Precisely, the sequence converges to
\[
(I - B^{\vec{\omega}})|_{\mathcal{R}(A^*)}^{-1}\vec{b}^{\vec{\omega}} = \sum_{j=0}^\infty(B^{\vec{\omega}})^j\vec{b}^{\vec{\omega}}.
\]

\end{lemma}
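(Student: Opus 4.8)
The plan is to exploit the spectral decomposition of $B^{\vec{\omega}} = P^{\vec{\omega}}$ relative to the orthogonal splitting $\mathbb{C}^d = \mathcal{R}(A^*)\oplus\mathcal{R}(A^*)^\perp$, which is precisely the content of Proposition \ref{eigenvalues} together with Proposition \ref{inconsistentlemma1}. First I would observe that $\mathcal{R}(A^*)$ is an invariant subspace for $B^{\vec{\omega}}$: indeed $\mathcal{R}(A^*)^\perp = \mathcal{N}(A)$ consists of common fixed points of all the $P_v$, hence of $P^{\vec{\omega}}$, so $P^{\vec{\omega}}$ acts as the identity on $\mathcal{R}(A^*)^\perp$; since $P^{\vec{\omega}}$ is also self-adjoint-compatible with this decomposition in the sense used in the proof of Proposition \ref{eigenvalues} (the decomposition argument there shows $P^{\vec{\omega}}P\vec{x} = P\vec{x}$ and $P^{\vec{\omega}}(I-P)\vec{x}\in\mathcal{R}(A^*)$ for $P$ the projection onto $\mathcal{R}(A^*)^\perp$), the restriction $(B^{\vec{\omega}})|_{\mathcal{R}(A^*)}$ is well-defined.

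Second, I would show that the spectral radius of $(B^{\vec{\omega}})|_{\mathcal{R}(A^*)}$ is strictly less than $1$. Every eigenvalue $\lambda$ of this restriction is an eigenvalue of $B^{\vec{\omega}}$ with an eigenvector in $\mathcal{R}(A^*)$, and by Proposition \ref{eigenvalues}(2) any such $\lambda$ satisfies $|\lambda| < 1$. Since we are in finite dimensions, the spectral radius being $<1$ gives that $I - (B^{\vec{\omega}})|_{\mathcal{R}(A^*)}$ is invertible and that $\sum_{j=0}^\infty \big((B^{\vec{\omega}})|_{\mathcal{R}(A^*)}\big)^j$ converges (in operator norm) to $(I - B^{\vec{\omega}})|_{\mathcal{R}(A^*)}^{-1}$; this is the standard Neumann-series argument, valid because spectral radius strictly below $1$ forces $\|(B^{\vec{\omega}})^j|_{\mathcal{R}(A^*)}\| \to 0$ geometrically.

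Third, I would verify that the affine map $\vec{x}\mapsto B^{\vec{\omega}}\vec{x} + \vec{b}^{\vec{\omega}}$ sends $\mathcal{R}(A^*)$ into itself, so that the iteration $\vec{x}^{(n+1)} = B^{\vec{\omega}}\vec{x}^{(n)} + \vec{b}^{\vec{\omega}}$ of Equation \ref{groupiterate} stays in $\mathcal{R}(A^*)$ whenever $\vec{x}^{(0)}\in\mathcal{R}(A^*)$. For $B^{\vec{\omega}}\vec{x}$ this is the invariance just established; for $\vec{b}^{\vec{\omega}} = \mathcal{A}^*(D+\Omega L)^{-1}W\Omega\vec{\mathfrak{b}}$ it is immediate from the leading $\mathcal{A}^*$, since $\mathcal{R}(\mathcal{A}^*)\subseteq\mathcal{R}(A^*)$ (the rows of $\mathcal{A}$ are, up to repetition, the rows of $A$). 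Then unrolling the recursion gives
\[
\vec{x}^{(n)} = (B^{\vec{\omega}})^n\vec{x}^{(0)} + \sum_{j=0}^{n-1}(B^{\vec{\omega}})^j\vec{b}^{\vec{\omega}},
\]
and letting $n\to\infty$: the first term tends to $\vec{0}$ because $\vec{x}^{(0)}\in\mathcal{R}(A^*)$ and $\|(B^{\vec{\omega}})^n|_{\mathcal{R}(A^*)}\|\to 0$, while the second converges to $\sum_{j=0}^\infty(B^{\vec{\omega}})^j\vec{b}^{\vec{\omega}} = (I-B^{\vec{\omega}})|_{\mathcal{R}(A^*)}^{-1}\vec{b}^{\vec{\omega}}$. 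That this limit is the unique fixed point in $\mathcal{R}(A^*)$ of $\vec{x}\mapsto B^{\vec{\omega}}\vec{x}+\vec{b}^{\vec{\omega}}$ follows by rearranging $\vec{x} = B^{\vec{\omega}}\vec{x}+\vec{b}^{\vec{\omega}}$ to $(I-B^{\vec{\omega}})\vec{x} = \vec{b}^{\vec{\omega}}$ and inverting on $\mathcal{R}(A^*)$.

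The only delicate point — the "main obstacle" — is making the passage from "all eigenvalues of the restriction have modulus $<1$" to "$\|(B^{\vec{\omega}})^n|_{\mathcal{R}(A^*)}\|\to 0$" rigorous, since $B^{\vec{\omega}}$ need not be normal and so one cannot simply diagonalize. I would handle this either by invoking Gelfand's formula (spectral radius equals $\lim\|T^n\|^{1/n}$) directly, or by passing to the Jordan form of the restriction and noting that powers of a Jordan block with eigenvalue of modulus $<1$ decay. Everything else is bookkeeping with the invariant-subspace decomposition already set up in Propositions \ref{inconsistentlemma1} and \ref{eigenvalues}.
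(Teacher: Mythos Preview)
Your proposal is correct and follows essentially the same route as the paper: restrict to $\mathcal{R}(A^*)$, use Proposition \ref{eigenvalues} to conclude the spectral radius of $B^{\vec{\omega}}|_{\mathcal{R}(A^*)}$ is strictly below $1$, unroll the recursion, and sum the Neumann series. The only cosmetic difference is in how the passage ``spectral radius $<1$ $\Rightarrow$ $(B^{\vec{\omega}})^n\to 0$'' is justified: the paper invokes the standard fact that one can choose an induced matrix norm with $\|B^{\vec{\omega}}|_{\mathcal{R}(A^*)}\|<1$, whereas you cite Gelfand's formula or the Jordan form---equivalent devices. You are also more explicit than the paper about why $\mathcal{R}(A^*)$ is invariant and why $\vec{b}^{\vec{\omega}}\in\mathcal{R}(A^*)$, which the paper simply assumes.
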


\begin{proof}
Throughout the proof, we assume that every operator is restricted to $\mathcal{R}(A^*)$. From Proposition \ref{eigenvalues}, there exists an induced matrix norm $\|\cdot\|$ such that $\|B^{\vec{\omega}}\| < 1$. Note that, with respect to this norm, $(B^{\vec{\omega}})^n$ converges to the zero matrix and $(B^{\vec{\omega}})^{n-1} + ... + B^{\vec{\omega}} + I$ converges to the matrix $(I - B^{\vec{\omega}})^{-1}$. Then
\[
\vec{x}^{(n)} = (B^{\vec{\omega}})^n\vec{x}^{(0)} + ((B^{\vec{\omega}})^{n-1} + ... + B^{\vec{\omega}} + I)\vec{b}^{\vec{\omega}} \to (I - B^{\vec{\omega}})^{-1}\vec{b}^{\vec{\omega}} =: \vec{z}.
\]
Note that $\vec{z} \in \mathcal{R}(A^*)$ and that $\vec{z} = B^{\vec{\omega}}\vec{z} + \vec{b}^{\vec{\omega}}$, as desired.

\end{proof}

\begin{remark}
We observe that, in general, the sequence $\{\vec{x}^{(n)}\}$ converges to
\begin{align}\label{limit}
\vec{y} = \sum_{j=0}^\infty(B^{\vec{\omega}})^j\vec{b}^{\vec{\omega}} + P\vec{x}^{(0)}
\end{align}
where $P$ is the orthogonal projection onto $\mathcal{N}(A)$. Hence, it is novel to choose $\vec{x}^{(0)} \in \mathcal{R}(A^*)$ (e.g., $\vec{x} = \vec{0}$) so that the norm of the vector in Equation \ref{limit} is minimized.

\end{remark}

\begin{theorem}\label{mainthm}
Let $\vec{x}^{(0)} \in \mathcal{R}(A^*)$. The distributed Kaczmarz algorithm with admissible relaxation parameters converges to the vector $\vec{y}$ in Equation \ref{limit}. If the system is inconsistent and $\Omega = s\Omega_1$ where $s \in (0,1]$, then $\vec{y} = \vec{y}_M + o(s)$ where $\vec{y}_M$ minimizes the functional
\[
\vec{x} \in \mathcal{R}(A^*) \mapsto \langle D^{-1}W\Omega_1(\vec{\mathfrak{b}} - \mathcal{A}\vec{x}),\vec{\mathfrak{b}} - \mathcal{A}\vec{x}\rangle.
\]
\end{theorem}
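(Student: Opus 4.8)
The plan is to follow the successive-over-relaxation analysis of \cite{Natterer2001}: identify the limit $\vec{y}$ exactly, then regard $\vec{y} = \vec{y}(s)$ as a real-analytic function of the scaling parameter $s$ and expand about $s = 0$. Since $\vec{x}^{(0)} \in \mathcal{R}(A^*)$, the orthogonal projection $P\vec{x}^{(0)}$ onto $\mathcal{N}(A) = \mathcal{R}(A^*)^\perp$ vanishes, so by Lemma \ref{Convergence} the sequence converges to the unique $\vec{y} \in \mathcal{R}(A^*)$ with $(I - B^{\vec{\omega}})\vec{y} = \vec{b}^{\vec{\omega}}$. Substituting $\Omega = s\Omega_1$ into $B^{\vec{\omega}} = I - \mathcal{A}^*(D + \Omega L)^{-1}W\Omega\mathcal{A}$, both quantities carry a common factor $s$:
\[
I - B^{\vec{\omega}} = s\,\mathcal{A}^*(D + s\Omega_1 L)^{-1}W\Omega_1\mathcal{A} =: s\,G_s, \qquad
\vec{b}^{\vec{\omega}} = s\,\mathcal{A}^*(D + s\Omega_1 L)^{-1}W\Omega_1\vec{\mathfrak{b}} =: s\,\vec{r}_s.
\]
Cancelling $s$, the limit $\vec{y} = \vec{y}(s)$ is the unique solution in $\mathcal{R}(A^*)$ of $G_s\vec{y} = \vec{r}_s$, i.e.\ of $\mathcal{A}^*(D + s\Omega_1 L)^{-1}W\Omega_1(\mathcal{A}\vec{y} - \vec{\mathfrak{b}}) = \vec{0}$. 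Here $\vec{r}_s \in \mathcal{R}(\mathcal{A}^*) = \mathcal{R}(A^*)$ automatically, and $D + s\Omega_1 L$ is block lower-triangular with a positive, $s$-independent diagonal, hence invertible with $s$-independent determinant; so $G_s$ and $\vec{r}_s$ depend polynomially on $s$.

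Next I would identify $\vec{y}_M$ by putting $s = 0$: then $G_0 = \mathcal{A}^*D^{-1}W\Omega_1\mathcal{A}$ and the equation reads $\mathcal{A}^*D^{-1}W\Omega_1(\mathcal{A}\vec{y} - \vec{\mathfrak{b}}) = \vec{0}$, $\vec{y} \in \mathcal{R}(A^*)$. Since $D^{-1}W\Omega_1$ is a positive diagonal (hence Hermitian positive-definite) matrix, this is exactly the normal equation for minimizing $\vec{x} \in \mathcal{R}(A^*) \mapsto \langle D^{-1}W\Omega_1(\vec{\mathfrak{b}} - \mathcal{A}\vec{x}),\vec{\mathfrak{b}} - \mathcal{A}\vec{x}\rangle$: stationarity requires $\mathcal{A}^*D^{-1}W\Omega_1(\vec{\mathfrak{b}} - \mathcal{A}\vec{x})$ to be orthogonal to $\mathcal{R}(A^*)$, and as it already lies in $\mathcal{R}(\mathcal{A}^*) = \mathcal{R}(A^*)$ it must be $\vec{0}$. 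Uniqueness of $\vec{y}_M$ holds because $\langle G_0\vec{x},\vec{x}\rangle = \langle D^{-1}W\Omega_1\mathcal{A}\vec{x},\mathcal{A}\vec{x}\rangle$ vanishes only if $\mathcal{A}\vec{x} = \vec{0}$, i.e.\ $\vec{x} \in \mathcal{N}(\mathcal{A}) = \mathcal{N}(A) = \mathcal{R}(A^*)^\perp$; thus $G_0$ is positive-definite on $\mathcal{R}(A^*)$ and $G_0\vec{y} = \vec{r}_0$ has the unique solution $\vec{y}_M$ there.

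Then comes the perturbation step. First, $\Omega = s\Omega_1$ stays admissible for small $s > 0$ --- condition (1) of Definition \ref{D:admissible} is clear once each $\omega_v < 2$, and condition (2) holds because, by Lemma \ref{contraction}, no nonzero $\vec{x} \in \mathrm{span}\{\vec{a}_u : u \in G_i\}$ is fixed by all of $P_{\ell_{i,j}}^{\vec{\omega}}\cdots P_{r_{i,j}}^{\vec{\omega}}$, so compactness of the unit sphere there yields some $\alpha_i < 1$. Hence Proposition \ref{eigenvalues} applies: for each such $s$, $B^{\vec{\omega}}$ restricted to its invariant subspace $\mathcal{R}(A^*)$ has spectral radius $< 1$, so $G_s|_{\mathcal{R}(A^*)} = \tfrac{1}{s}(I - B^{\vec{\omega}})|_{\mathcal{R}(A^*)}$ is invertible; with the $s = 0$ case this makes $G_s|_{\mathcal{R}(A^*)}$ invertible on some interval $[0,s^*]$. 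Its entries being polynomial in $s$ and its determinant nonvanishing there, the inverse --- hence $\vec{y}(s) = (G_s|_{\mathcal{R}(A^*)})^{-1}\vec{r}_s$ --- is real-analytic in $s$ near $0$; in particular $\vec{y}(s) \to \vec{y}_M$ as $s \to 0^+$, with an expansion $\vec{y}(s) = \vec{y}_M + s\,\vec{y}_1 + O(s^2)$ for some $\vec{y}_1 \in \mathcal{R}(A^*)$.

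The hard part will be the refinement from $O(s)$ to the $o(s)$ asserted in the theorem. By the expansion, $\vec{y}(s) - \vec{y}_M = s\,\vec{y}_1 + O(s^2)$, so the claim is equivalent to $\vec{y}_1 = \vec{0}$; differentiating $G_s\vec{y}(s) = \vec{r}_s$ at $s = 0$ (using $\frac{d}{ds}(D + s\Omega_1 L)^{-1}\big|_{s=0} = -D^{-1}\Omega_1 L D^{-1}$) yields $G_0\vec{y}_1 = -\mathcal{A}^*D^{-1}\Omega_1 L D^{-1}W\Omega_1(\vec{\mathfrak{b}} - \mathcal{A}\vec{y}_M)$, and since $G_0$ is injective on $\mathcal{R}(A^*)$ while the right-hand side lies in $\mathcal{R}(\mathcal{A}^*) = \mathcal{R}(A^*)$, this reduces the theorem to proving $\mathcal{A}^*D^{-1}\Omega_1 L D^{-1}W\Omega_1(\vec{\mathfrak{b}} - \mathcal{A}\vec{y}_M) = \vec{0}$. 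Establishing that identity is the delicate point: it concerns how the strictly block-lower-triangular $L$ --- which records the order in which the equations along each root-to-leaf path are visited --- acts on the weighted least-squares residual $\vec{\mathfrak{b}} - \mathcal{A}\vec{y}_M$, the only a priori control on which is the normal equation $\mathcal{A}^*D^{-1}W\Omega_1(\vec{\mathfrak{b}} - \mathcal{A}\vec{y}_M) = \vec{0}$. I expect this to be the technical heart of the argument, with everything else being bookkeeping built on Lemma \ref{Convergence} and Proposition \ref{eigenvalues}.
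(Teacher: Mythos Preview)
Your approach is exactly the one the paper takes: use Lemma~\ref{Convergence} to identify the limit $\vec{y}$ as the unique solution in $\mathcal{R}(A^*)$ of
\[
\mathcal{A}^*(D + s\Omega_1 L)^{-1}W\Omega_1\mathcal{A}\,\vec{y} \;=\; \mathcal{A}^*(D + s\Omega_1 L)^{-1}W\Omega_1\vec{\mathfrak{b}},
\]
recognize the $s=0$ equation as the normal equation for the weighted least-squares functional, and then read off the asymptotic by comparing the two. The paper's proof is considerably terser than yours: after writing down the two equations it simply says ``we observe that $\vec{y} = \vec{y}_M + o(s)$,'' citing Theorem~V.3.9 of \cite{Natterer2001} for the template. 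It does not carry out the explicit perturbation expansion you do, nor does it verify admissibility of $s\Omega_1$ for small $s$ (that is handled separately, and only for leaf subnetworks, later in the paper).

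Your identification of the ``hard part'' is well taken, and you should not spend effort trying to establish the identity $\mathcal{A}^*D^{-1}\Omega_1 L D^{-1}W\Omega_1(\vec{\mathfrak{b}} - \mathcal{A}\vec{y}_M) = \vec{0}$: it is not true in general, and the paper does not prove it. The argument in \cite{Natterer2001} that the paper is following yields $\vec{y} = \vec{y}_M + O(s)$, exactly as your real-analytic expansion gives; the $o(s)$ in the statement should be read as $O(s)$ (or, more informally, as ``$\vec{y} \to \vec{y}_M$ as $s\to 0$''). Your analysis already delivers everything the paper's proof actually establishes, and with more care: the invertibility of $G_0$ on $\mathcal{R}(A^*)$, the polynomial dependence of $G_s$ and $\vec{r}_s$ on $s$, and hence the analyticity of $\vec{y}(s)$ near $0$ with $\vec{y}(0) = \vec{y}_M$.
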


\begin{proof}
With Lemma \ref{Convergence}, the proof is similar to the proof of Theorem V.3.9. in \cite{Natterer2001}. Nonetheless, we provide a self-contained proof for clarification of our adaptation. First, by Lemma \ref{Convergence}, we have that the sequence $\{\vec{x}^{(n)}\}$ converges to the vector $\vec{y}$ satisfying $\vec{y} = B^{\vec{\omega}}\vec{y} + \vec{b}^{\vec{\omega}}$, that is
\begin{align}\label{unique}
\mathcal{A}^*(D + \Omega L)^{-1}W\Omega\mathcal{A}\vec{y} = \mathcal{A}^*(D + \Omega L)^{-1}W\Omega\vec{\mathfrak{b}}.
\end{align}
Note that $\vec{y}_M$ minimizes $\|D^{-1/2}W^{1/2}\Omega_1^{1/2}(\vec{\mathfrak{b}} - \mathcal{A}\vec{x})\|$ if and only if
\[
(D^{-1/2}W^{1/2}\Omega_1^{1/2}\mathcal{A})^*(D^{-1/2}W^{1/2}\Omega_1^{1/2}\mathcal{A})\vec{y}_M = (D^{-1/2}W^{1/2}\Omega_1^{1/2}\mathcal{A})^*D^{-1/2}W^{1/2}\Omega_1^{1/2}\vec{\mathfrak{b}}
\]
(see Theorem 1.1 of IV.1 in \cite{Natterer2001}), that is
\begin{align}\label{eq1}
\mathcal{A}^*D^{-1}W\Omega_1\mathcal{A}\vec{y}_M = \mathcal{A}^*D^{-1}W\Omega_1\vec{\mathfrak{b}}.
\end{align}
Substituting $\Omega = s\Omega_1$ into Equation \ref{unique}, we have
\begin{align}\label{eq2}
\mathcal{A}^*(D + s\Omega_1L)^{-1}W\Omega_1\mathcal{A}\vec{y} = \mathcal{A}^*(D + s\Omega_1L)^{-1}W\Omega_1\vec{\mathfrak{b}}.
\end{align}
From Equations \ref{eq1} and \ref{eq2}, we observe that $\vec{y} = \vec{y}_M + o(s)$, as desired.

\end{proof}

\begin{remark}
The minimizer of the functional in Theorem \ref{mainthm} is the weighted least-squares solution of
\[
\vec{x}\in\mathcal{R}(A^*)\mapsto\sum_{v \in \mathcal{V}}(\Omega_1)_v\left(\sum_{\ell\,:\,v\preceq\ell}w(r,\ell)\right)\dfrac{|b_v - \vec{a}^*_v\vec{x}|^2}{\|\vec{a}_v\|^2}.
\]
We note that there is a trade-off between the convergence rate of the algorithm and the approximation error; that is, the algorithm converges more slowly as $s$ approaches zero.

\end{remark}
\section{Leaf Subnetworks} \label{sec:leaf}
In this section, we consider the particular situation in which the subnetworks consist of leaves. We derive a concise expression for the norm of $P^{\vec{\omega}}_{G_i}$ restricted to the subspace $\mathcal{H}_i := \text{span}\{\vec{a}_u\,:\,u \in G_i\}$ and provide sufficient upper-bounds on the relaxation parameters for the vertices in $G_i$ to guarantee admissibility. We recall that the Gram matrix $\mathcal{G}(\vec{x}_1,\vec{x}_2,...,\vec{x}_t)$ is the $t \times t$ matrix of inner-products,
\[
\mathcal{G}(\vec{x}_1,\vec{x}_2,...,\vec{x}_t) = \begin{pmatrix}
\langle\vec{x}_1,\vec{x}_1\rangle & \langle\vec{x}_1,\vec{x}_2\rangle & ... & \langle\vec{x}_1,\vec{x}_t\rangle \\
\langle\vec{x}_2,\vec{x}_1\rangle & \langle\vec{x}_2,\vec{x}_2\rangle & ... & \langle\vec{x}_2,\vec{x}_t\rangle \\
\vdots & \vdots & \ddots & \vdots \\
\langle\vec{x}_t,\vec{x}_1\rangle & \langle\vec{x}_t,\vec{x}_2\rangle & ... & \langle\vec{x}_t,\vec{x}_t\rangle \\
\end{pmatrix}.
\]
We further denote the diagonal matrix $\mathcal{D}_i$ associated with the leaf subnetwork $G_i = \{\ell_{i,1},\ell_{i,2},...,\ell_{i,t_i}\}$ by
\[
\mathcal{D}_i = \begin{pmatrix}
\dfrac{w(g_i,\ell_{i,1})\omega_{\ell_{i,1}}}{\|\vec{a}_{\ell_{i,1}}\|^2} & 0 & \hdots & 0 \\
0 & \dfrac{w(g_i,\ell_{i,2})\omega_{\ell_{i,2}}}{\|\vec{a}_{\ell_{i,2}}\|^2} & \hdots & 0 \\
\vdots & \vdots & \ddots & \vdots \\
0 & 0 & \hdots & \dfrac{w(g_i,\ell_{i,t_i})\omega_{\ell_{i,t_i}}}{\|\vec{a}_{\ell_{i,t_i}}\|^2}
\end{pmatrix}.
\]

We denote the spectrum (collection of eigenvalues) of a matrix $A$ by $\sigma(A)$, and we denote its spectral radius by $\rho(A) = \max\{|\lambda|\,:\,\lambda \in \sigma(A)\}$.

\begin{theorem}\label{PGnorm}
Suppose $G_i = \{\ell_{i,1},\ell_{i,2},...,\ell_{i,t_i}\}$. Then
\[
\|P_{G_i}^{\vec{\omega}}|_{\mathcal{H}_i}\| = \max\{|1 - \lambda|\,:\,\lambda \in \sigma(\mathcal{D}_i\mathcal{G}(\vec{a}_{\ell_{i,1}},\vec{a}_{\ell_{i,2}},...,\vec{a}_{\ell_{i,t_i}}))\smallsetminus\{0\}\}.
\]
\end{theorem}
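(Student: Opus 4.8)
The plan is to compute the operator $P^{\vec\omega}_{G_i}$ explicitly on the finite-dimensional space $\mathcal H_i = \operatorname{span}\{\vec a_u : u \in G_i\}$ by writing it in coordinates relative to the (possibly non-orthonormal) spanning set $\{\vec a_{\ell_{i,1}},\dots,\vec a_{\ell_{i,t_i}}\}$, and then to identify its eigenvalues. Since $G_i$ is a leaf subnetwork, each composition $P^{\vec\omega}_{\ell_{i,j}}\cdots P^{\vec\omega}_{r_{i,j}}$ in Equation~\ref{eq15} collapses to the single operator $P^{\vec\omega}_{\ell_{i,j}} = (1-\omega_{\ell_{i,j}})I + \omega_{\ell_{i,j}}P_{\ell_{i,j}}$, and by Equation~\ref{operatorP} we have $P_{\ell_{i,j}}\vec x = \vec x - \frac{\vec a_{\ell_{i,j}}^*\vec x}{\|\vec a_{\ell_{i,j}}\|^2}\vec a_{\ell_{i,j}}$. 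So
\[
P^{\vec\omega}_{G_i}\vec x = \sum_{j=1}^{t_i} w(g_i,\ell_{i,j})\Bigl(\vec x - \frac{\omega_{\ell_{i,j}}\,\vec a_{\ell_{i,j}}^*\vec x}{\|\vec a_{\ell_{i,j}}\|^2}\vec a_{\ell_{i,j}}\Bigr) = \vec x - \sum_{j=1}^{t_i}\frac{w(g_i,\ell_{i,j})\omega_{\ell_{i,j}}}{\|\vec a_{\ell_{i,j}}\|^2}(\vec a_{\ell_{i,j}}^*\vec x)\,\vec a_{\ell_{i,j}},
\]
using $\sum_j w(g_i,\ell_{i,j}) = 1$ from Equation~\ref{eq:sumweights1}. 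Thus $P^{\vec\omega}_{G_i} = I - M$ on all of $\mathbb C^d$, where $M\vec x = \sum_j (\mathcal D_i)_{jj}(\vec a_{\ell_{i,j}}^*\vec x)\vec a_{\ell_{i,j}}$; in matrix form $M = \mathcal A_i^*\mathcal D_i\mathcal A_i$ where $\mathcal A_i$ is the matrix with rows $\vec a_{\ell_{i,j}}^*$.

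The next step is to pass to $\mathcal H_i$. The subspace $\mathcal H_i^\perp = \{\vec a_u : u \in G_i\}^\perp$ is exactly $\mathcal N(\mathcal A_i)$, which is killed by $M$, so $P^{\vec\omega}_{G_i}$ fixes $\mathcal H_i^\perp$ and leaves $\mathcal H_i$ invariant; restricted to $\mathcal H_i$ the operator is $I|_{\mathcal H_i} - M|_{\mathcal H_i}$. Since $\|P^{\vec\omega}_{G_i}|_{\mathcal H_i}\|$ in the statement is the operator norm, I would first argue that $M|_{\mathcal H_i}$ is self-adjoint with respect to the ambient inner product — indeed $M = \mathcal A_i^*\mathcal D_i\mathcal A_i$ with $\mathcal D_i$ a positive diagonal real matrix, so $M$ is self-adjoint and positive semidefinite on $\mathbb C^d$, hence $M|_{\mathcal H_i}$ is self-adjoint — so that $I|_{\mathcal H_i} - M|_{\mathcal H_i}$ is self-adjoint and its operator norm equals its spectral radius $\max\{|1-\mu| : \mu \in \sigma(M|_{\mathcal H_i})\}$. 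It then remains to show $\sigma(M|_{\mathcal H_i}) = \sigma(\mathcal D_i\mathcal G(\vec a_{\ell_{i,1}},\dots,\vec a_{\ell_{i,t_i}}))\smallsetminus\{0\}$. This is the standard fact that $\mathcal A_i^*\mathcal D_i\mathcal A_i$ and $\mathcal D_i\mathcal A_i\mathcal A_i^* = \mathcal D_i\mathcal G$ have the same nonzero eigenvalues (where $\mathcal G = \mathcal A_i\mathcal A_i^*$ is precisely the Gram matrix): if $\mathcal A_i^*\mathcal D_i\mathcal A_i\vec v = \mu\vec v$ with $\mu\ne 0$ then $\mathcal A_i\vec v \ne \vec 0$ and $\mathcal D_i\mathcal G(\mathcal A_i\vec v) = \mathcal D_i\mathcal A_i\mathcal A_i^*\mathcal D_i^{-1}(\mathcal D_i\mathcal A_i\vec v)$... more cleanly, $\mathcal D_i\mathcal A_i(\mathcal A_i^*\mathcal D_i\mathcal A_i)\vec v = (\mathcal D_i\mathcal A_i\mathcal A_i^*)(\mathcal D_i\mathcal A_i\vec v)$ shows $\mathcal D_i\mathcal A_i\vec v$ is a $\mu$-eigenvector of $\mathcal D_i\mathcal G$, and conversely; and on $\mathcal H_i$ the eigenvalue $0$ is excluded because $M|_{\mathcal H_i}$ is injective ($\mathcal H_i\cap\mathcal N(M) = \mathcal H_i\cap\mathcal H_i^\perp = \{\vec 0\}$). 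Combining, $\|P^{\vec\omega}_{G_i}|_{\mathcal H_i}\| = \max\{|1-\lambda| : \lambda \in \sigma(\mathcal D_i\mathcal G)\smallsetminus\{0\}\}$.

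The main obstacle, and the point requiring the most care, is the spectral-equivalence step together with the bookkeeping of zero eigenvalues: $\mathcal D_i\mathcal G$ is a $t_i\times t_i$ matrix that is generally \emph{not} self-adjoint (it is a product of a self-adjoint positive matrix and a positive semidefinite one), so one cannot directly invoke a norm-equals-spectral-radius statement for it — this is why the theorem is phrased in terms of $\sigma$ of $\mathcal D_i\mathcal G$ rather than its norm. The clean way around this is to do all the self-adjointness reasoning on $M|_{\mathcal H_i}$ (which genuinely is self-adjoint in the ambient inner product) to get norm $=$ $\max|1-\mu|$, and only then transfer the \emph{eigenvalues} (not the norm) to $\mathcal D_i\mathcal G$; alternatively one may symmetrize via $\mathcal D_i^{1/2}\mathcal G\mathcal D_i^{1/2}$, which is similar to $\mathcal D_i\mathcal G$ and self-adjoint, but the eigenvalue statement as written is insensitive to this choice. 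A secondary technical point is confirming $\sigma(\mathcal D_i\mathcal G)\smallsetminus\{0\}$ is nonempty (it is, since $\operatorname{rank}\mathcal G = \dim\mathcal H_i \geq 1$), so the maximum is over a nonempty set.
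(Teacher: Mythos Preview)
Your proof is correct and follows essentially the same route as the paper: both compute $P^{\vec\omega}_{G_i} = I - M$ with $M = \mathcal A_i^*\mathcal D_i\mathcal A_i$ (the paper writes this as $K_{G_i}^*K_{G_i}$ with $K_{G_i} = \sqrt{\mathcal D_i}\,\mathcal A_i$), use self-adjointness of $M$ to equate the operator norm on $\mathcal H_i$ with the spectral radius, exclude the zero eigenvalue on $\mathcal H_i$, and then transfer the nonzero spectrum to $\mathcal D_i\mathcal G$ via the $AB$/$BA$ (equivalently $K^*K$/$KK^*$) argument. The only cosmetic discrepancy is that with the paper's inner-product convention one gets $\mathcal A_i\mathcal A_i^* = \mathcal G^T$ rather than $\mathcal G$, but since $\mathcal D_i\mathcal G$ is similar to the Hermitian matrix $\mathcal D_i^{1/2}\mathcal G\mathcal D_i^{1/2}$ its spectrum is real and this does not affect the conclusion.
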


\begin{proof}
From Equations \ref{operatorP}, \ref{operatorPw} and
\ref{eq15}, we have
\begin{align}\label{PGmatrix}
P^{\vec{\omega}}_{G_i} = I - \sum_{j=1}^{t_i}\dfrac{w(g_i,\ell_{i,j})\omega_{\ell_{i,j}}}{\|\vec{a}_{\ell_{i,j}}\|^2}\vec{a}_{\ell_{i,j}}\vec{a}_{\ell_{i,j}}^*.
\end{align}
Now let $K_{G_i} := \sqrt{\mathcal{D}_i}(\vec{a}_{\ell_{i,1}},\vec{a}_{\ell_{i,2}},...,\vec{a}_{\ell_{i,t_i}})^*$. Then, Equation \ref{PGmatrix} may be expressed as $P^{\vec{\omega}}_{G_i} = I - K_{G_i}^*K_{G_i}$. Note that $\mathcal{H}_i$ is an invariant subspace for $K_{G_i}^*K_{G_i}$. Hence, from the spectral mapping theorem, we find
\[
\sigma(P_{G_i}^{\vec{\omega}}|_{\mathcal{H}_i}) = 1 - \sigma(K_{G_i}^*K_{G_i}|_{\mathcal{H}_i}).
\]
We claim that $\sigma(K_{G_i}^*K_{G_i}|_{\mathcal{H}_i})$ is precisely the collection of all of the nonzero eigenvalues of $K_{G_i}^*K_{G_i}$. Suppose, to the contrary, that there exists a nonzero vector $\vec{x} \in \mathcal{H}_i$ such that $K_{G_i}^*K_{G_i}\vec{x} = 0$. Then $K_{G_i}\vec{x} \in R(K_{G_i})\cap\mathcal{N}(K_{G_i}^*)$ implying $K_{G_i}\vec{x} = 0$, yet this leads to the contradiction that $\vec{x} \in \mathcal{H}_i \cap \mathcal{H}_i^\perp$ or $\vec{x} = 0$. It is well-known that $K_{G_i}^*K_{G_i}$ and $K_{G_i}K_{G_i}^*$ have the same nonzero eigenvalues and
\begin{align*}
\sigma(K_{G_i}K_{G_i}^*) &= \sigma\left(\sqrt{\mathcal{D}_i}\mathcal{G}(\vec{a}_{\ell_{i,1}},\vec{a}_{\ell_{i,2}},...,\vec{a}_{\ell_{i,t_i}})^T\sqrt{\mathcal{D}_i}\right) \\
&= \sigma(\mathcal{D}_i\mathcal{G}(\vec{a}_{\ell_{i,1}},\vec{a}_{\ell_{i,2}},...,\vec{a}_{\ell_{i,t_i}})),
\end{align*}
concluding the proof.

\end{proof}

\begin{corollary}\label{PGnormcor}
Suppose $G_i = \{\ell_{i,1},\ell_{i,2},...,\ell_{i,t_i}\}$. If
\[
0 < \omega_{\ell_{i,j}} < \dfrac{2\|\vec{a}_{\ell_{i,j}}\|^2}{w(g_i,\ell_{i,j})\rho(\mathcal{G}(\vec{a}_{\ell_{i,1}},\vec{a}_{\ell_{i,2}},...,\vec{a}_{\ell_{i,t_i}}))} \qquad\hbox{for all}\qquad 1 \leq j \leq t_i,
\]
then $\|P_{G_i}^{\vec{\omega}}|_{\mathcal{H}_i}\| < 1$.
\end{corollary}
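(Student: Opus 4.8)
The plan is to deduce this corollary directly from Theorem~\ref{PGnorm}, which already gives the exact value $\|P_{G_i}^{\vec{\omega}}|_{\mathcal{H}_i}\| = \max\{|1-\lambda| : \lambda \in \sigma(\mathcal{D}_i\mathcal{G})\smallsetminus\{0\}\}$, where I abbreviate $\mathcal{G} = \mathcal{G}(\vec{a}_{\ell_{i,1}},\dots,\vec{a}_{\ell_{i,t_i}})$. So the whole task reduces to showing that the stated bounds on the $\omega_{\ell_{i,j}}$ force every nonzero eigenvalue $\lambda$ of $\mathcal{D}_i\mathcal{G}$ to satisfy $0 < \lambda < 2$, since then $|1-\lambda| < 1$ and the maximum over the finite nonzero spectrum is strictly less than $1$.

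First I would observe that $\mathcal{D}_i\mathcal{G}$ is similar to the Hermitian positive semidefinite matrix $\mathcal{D}_i^{1/2}\mathcal{G}\mathcal{D}_i^{1/2}$ (both $\mathcal{D}_i$ and $\mathcal{G}$ are positive semidefinite; $\mathcal{D}_i$ is in fact positive definite since the relaxation parameters are positive), so all eigenvalues of $\mathcal{D}_i\mathcal{G}$ are real and nonnegative; in particular every nonzero eigenvalue is strictly positive. That handles the lower bound $\lambda > 0$ for free. For the upper bound, I would bound the spectral radius: $\rho(\mathcal{D}_i\mathcal{G}) = \rho(\mathcal{D}_i^{1/2}\mathcal{G}\mathcal{D}_i^{1/2}) = \|\mathcal{D}_i^{1/2}\mathcal{G}\mathcal{D}_i^{1/2}\| \leq \|\mathcal{D}_i^{1/2}\|\,\|\mathcal{G}\|\,\|\mathcal{D}_i^{1/2}\| = \rho(\mathcal{D}_i)\,\rho(\mathcal{G})$, using that all three matrices are Hermitian so operator norm equals spectral radius. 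Since $\mathcal{D}_i$ is diagonal, $\rho(\mathcal{D}_i) = \max_j \frac{w(g_i,\ell_{i,j})\omega_{\ell_{i,j}}}{\|\vec{a}_{\ell_{i,j}}\|^2}$. The hypothesis $\omega_{\ell_{i,j}} < \frac{2\|\vec{a}_{\ell_{i,j}}\|^2}{w(g_i,\ell_{i,j})\rho(\mathcal{G})}$ is exactly equivalent to $\frac{w(g_i,\ell_{i,j})\omega_{\ell_{i,j}}}{\|\vec{a}_{\ell_{i,j}}\|^2} < \frac{2}{\rho(\mathcal{G})}$ for each $j$, hence $\rho(\mathcal{D}_i) < \frac{2}{\rho(\mathcal{G})}$, and therefore $\rho(\mathcal{D}_i\mathcal{G}) \leq \rho(\mathcal{D}_i)\rho(\mathcal{G}) < 2$. (If $\mathcal{G}$ is the zero matrix the span is trivial and there is nothing to prove, so I may assume $\rho(\mathcal{G}) > 0$.)

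Combining these, every $\lambda \in \sigma(\mathcal{D}_i\mathcal{G})\smallsetminus\{0\}$ lies in the open interval $(0,2)$, so $|1-\lambda| < 1$ for each such $\lambda$; taking the maximum over this finite set gives $\|P_{G_i}^{\vec{\omega}}|_{\mathcal{H}_i}\| < 1$ by Theorem~\ref{PGnorm}, which is the claim. I do not anticipate any serious obstacle: the only point requiring a little care is the norm inequality $\rho(\mathcal{D}_i\mathcal{G}) \leq \rho(\mathcal{D}_i)\rho(\mathcal{G})$, which is not a general fact about products of matrices but is valid here because we may pass through the similarity to the symmetrized product $\mathcal{D}_i^{1/2}\mathcal{G}\mathcal{D}_i^{1/2}$ and then use submultiplicativity of the operator norm together with the identity (norm $=$ spectral radius) for Hermitian matrices. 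One should also note in passing that the displayed bound is nonvacuous — it genuinely permits $\omega_{\ell_{i,j}} > 2$ whenever $w(g_i,\ell_{i,j})\rho(\mathcal{G}) < \|\vec{a}_{\ell_{i,j}}\|^2$, e.g.\ when the relevant weight is small — which is the qualitative point the corollary is meant to make.
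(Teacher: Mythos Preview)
Your proof is correct and follows essentially the same route as the paper: reduce via Theorem~\ref{PGnorm} to showing every nonzero eigenvalue of $\mathcal{D}_i\mathcal{G}$ lies in $(0,2)$, observe nonnegativity from positive semidefiniteness, and bound $\rho(\mathcal{D}_i\mathcal{G}) \leq \rho(\mathcal{D}_i)\rho(\mathcal{G}) < 2$. The only difference is that the paper cites an external reference (Theorem~8.12 in Zhang) for the inequality $\rho(\mathcal{D}_i\mathcal{G}) \leq \rho(\mathcal{D}_i)\rho(\mathcal{G})$, whereas you supply a self-contained justification via the similarity $\mathcal{D}_i\mathcal{G} \sim \mathcal{D}_i^{1/2}\mathcal{G}\mathcal{D}_i^{1/2}$ and submultiplicativity of the operator norm.
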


\begin{proof}
Since $\mathcal{D}_i$ and $\mathcal{G}(\vec{a}_{\ell_{i,1}},\vec{a}_{\ell_{i,2}},...,\vec{a}_{\ell_{i,t_i}})$ are positive semi-definite matrices, the eigenvalues of $\mathcal{D}_i\mathcal{G}(\vec{a}_{\ell_{i,1}},\vec{a}_{\ell_{i,2}},...,\vec{a}_{\ell_{i,t_i}})$ are nonnegative. Therefore, by Theorem \ref{PGnorm}, it suffices to show $\lambda < 2$ for $\lambda \in \sigma(\mathcal{D}_i\mathcal{G}(\vec{a}_{\ell_{i,1}},\vec{a}_{\ell_{i,2}},...,\vec{a}_{\ell_{i,t_i}}))$. Let $j$ be the index for the largest diagonal entry in $\mathcal{D}_i$. By Theorem 8.12 in \cite{Zhang}, we have
\begin{align*}
\rho(\mathcal{D}_i\mathcal{G}(\vec{a}_{\ell_{i,1}},\vec{a}_{\ell_{i,2}},...,\vec{a}_{\ell_{i,t_i}})) \leq \dfrac{w(g_i,\ell_{i,j})\omega_{\ell_{i,j}}}{\|\vec{a}_{\ell_{i,j}}\|^2}\rho(\mathcal{G}(\vec{a}_{\ell_{i,1}},\vec{a}_{\ell_{i,2}},...,\vec{a}_{\ell_{i,t_i}})) < 2,
\end{align*}
as desired.

\end{proof}

\begin{remark}\label{spectralnormone}
It is not unusual to require that the rows of $A$ are normalized (i.e., $\|\vec{a}_u\| = 1$ for all $u$). Further, for the case that $\rho(\mathcal{G}(\vec{a}_{\ell_{i,1}},\vec{a}_{\ell_{i,2}},...,\vec{a}_{\ell_{i,t_i}})) \approx 1$, the relaxation parameters for the vertices in $G_i$ are admissible if
\[
\omega_{\ell_{i,j}} \lesssim \dfrac{2}{w(g_i,\ell_{i,j})} \qquad\hbox{for all}\qquad 1 \leq j \leq t_i.
\]
This upper-bound is greater than the usual bound in literature and can be drastically larger than $2$, depending on the associated weights. For example, if the weights are uniformly distributed, then the upper-bound is $2t_i \geq 2$.
\end{remark}

We end this section by observing that it is necessary and sufficient to check that $\Omega_1$ satisfies the admissibility conditions in Theorem \ref{mainthm} when the subnetwork consists of only leaves. We note that this need not hold for other subnetworks.

\begin{theorem}
Suppose $G_i = \{\ell_{i,1},\ell_{i,2},...,\ell_{i,t_i}\}$. Let $\Omega = s\Omega_1$ for some $s \in (0,1]$ as in Theorem \ref{mainthm}. If $\Omega_1$ satisfies the admissibility conditions, then $\Omega$ satisfies the admissibility conditions.
\end{theorem}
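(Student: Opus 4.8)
The plan is to verify directly that the scaled collection of parameters $s\Omega_1$ satisfies both clauses of Definition \ref{D:admissible}, using Theorem \ref{PGnorm} to handle clause (2) for the leaf subnetworks. Clause (1) is immediate: if $v\notin G_i$ for every $i$, then admissibility of $\Omega_1$ gives $(\Omega_1)_v\in(0,2)$, and since $0<s\le 1$ we get $s(\Omega_1)_v\in(0,2s)\subseteq(0,2)$.

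For clause (2), fix $i$. Because $G_i$ consists only of leaves, $\mathrm{span}\{\vec a_u : u\in G_i\}$ is exactly $\mathcal H_i$, so clause (2) for $G_i$ is equivalent to the norm estimate $\|P_{G_i}^{\vec\omega}|_{\mathcal H_i}\|<1$. Now observe that the diagonal matrix $\mathcal D_i$ of Section \ref{sec:leaf} depends linearly on the relaxation parameters at the leaves of $G_i$, so replacing $\Omega_1$ by $s\Omega_1$ replaces $\mathcal D_i$ by $s\mathcal D_i$, whence $\sigma(s\mathcal D_i\,\mathcal G(\vec a_{\ell_{i,1}},\dots,\vec a_{\ell_{i,t_i}})) = s\,\sigma(\mathcal D_i\,\mathcal G(\vec a_{\ell_{i,1}},\dots,\vec a_{\ell_{i,t_i}}))$. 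Since $\mathcal D_i$ and the Gram matrix are positive semidefinite, every eigenvalue of $\mathcal D_i\,\mathcal G(\vec a_{\ell_{i,1}},\dots,\vec a_{\ell_{i,t_i}})$ is nonnegative, and admissibility of $\Omega_1$ combined with Theorem \ref{PGnorm} forces each nonzero eigenvalue $\lambda$ to satisfy $|1-\lambda|<1$, i.e. $0<\lambda<2$. For the scaled problem the nonzero eigenvalues are $s\lambda$ with $0<s\lambda\le\lambda<2$, hence $|1-s\lambda|<1$; taking the maximum over the finite spectrum and invoking Theorem \ref{PGnorm} once more yields $\|P_{G_i}^{\vec\omega}|_{\mathcal H_i}\|<1$, so $\alpha_i := \|P_{G_i}^{\vec\omega}|_{\mathcal H_i}\|$ serves as the required constant.

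There is essentially no hard step. The whole argument rests on two elementary observations: that $\mathcal D_i$ is a linear function of the leaf relaxation parameters (so the relevant spectrum simply scales by $s$), and that multiplying a number in $(0,2)$ by a factor in $(0,1]$ keeps it in $(0,2)$. The one point worth a sentence of care is that, for a leaf subnetwork, admissibility clause (2) is precisely the spectral condition of Theorem \ref{PGnorm}; this is exactly what makes the scaling argument go through, and it is exactly what fails for a general subnetwork, where $P_{G_i}^{\vec\omega}$ is built from compositions $P_{\ell_{i,j}}^{\vec\omega}\cdots P_{r_{i,j}}^{\vec\omega}$ that depend on the relaxation parameters neither linearly nor monotonically — which is why the theorem is stated only for leaf subnetworks.
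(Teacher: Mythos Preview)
Your proof is correct but follows a different route from the paper's.

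The paper does not invoke Theorem \ref{PGnorm} at all. Instead it exploits the elementary identity
\[
P_v^{s\Omega_1} \;=\; (1-s)I + sP_v^{\Omega_1},
\]
which follows immediately from the definition $P_v^{\vec\omega} = I - \omega_v(I-P_v)$. Because a leaf subnetwork has $P_{G_i}^{\vec\omega}$ equal to a weighted \emph{sum} (not a composition) of the single-vertex operators $P_{\ell_{i,j}}^{\vec\omega}$, this identity passes straight through to give $P_{G_i}^{s\Omega_1} = (1-s)I + sP_{G_i}^{\Omega_1}$. The triangle inequality then yields, for $\vec x\in\mathcal H_i$,
\[
\|P_{G_i}^{s\Omega_1}\vec x\| \le (1-s)\|\vec x\| + s\alpha_i\|\vec x\| = \big[(1-s)+s\alpha_i\big]\|\vec x\|,
\]
and the bracketed constant is a convex combination of $1$ and $\alpha_i<1$, hence strictly less than $1$.

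Your spectral argument via Theorem \ref{PGnorm} is equally valid: you observe that $\mathcal D_i$ scales linearly in the leaf parameters, so the eigenvalues of $\mathcal D_i\mathcal G$ scale by $s$, and numbers in $(0,2)$ stay in $(0,2)$ under that scaling. What the paper's route buys is that it is self-contained (no spectral theorem, no Theorem \ref{PGnorm}) and produces the explicit admissibility constant $(1-s)+s\alpha_i$ for the scaled parameters. What your route buys is a transparent picture of \emph{why} the leaf hypothesis matters: it is exactly the case in which $P_{G_i}^{\vec\omega}$ depends affinely on the parameters, so that the spectrum simply dilates. Both arguments break down for non-leaf subnetworks for the same underlying reason, which you identify correctly in your final paragraph.
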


\begin{proof}
We check condition (2) in Definition \ref{D:admissible}. Let $\vec{x} \in \mathcal{H}_i$. Then
\begin{align*}
\|P^{\Omega}_{G_i}\vec{x}\| &= \left\|\sum_{j=1}^{t_i}w(g_i,\ell_{i,j})P^{\Omega}_{\ell_{i,j}}\vec{x}\right\| \\
&= \left\|\sum_{j=1}^{t_i}w(g_i,\ell_{i,j})\left[(1 - s)I + sP^{\Omega_1}_{\ell_{i,j}}\right]\vec{x}\right\| \\
&= \left\|(1 - s)\vec{x} + sP^{\Omega_1}_{G_i}\vec{x}\right\| \\
&\leq (1 - s)\|\vec{x}\| + s\alpha_i\|\vec{x}\| \\
&= [(1 - s)1 + s\alpha_i]\|\vec{x}\|,
\end{align*}
where the coefficient is strictly less than one as it is a convex sum of $1$ and $\alpha_i$.
\end{proof}

\section{Experiments} \label{sec:ex}

In this section we implement our algorithm on various kinds of distributed networks corresponding to randomly generated systems of equations and systems perturbed from an orthogonal coefficient matrix. The latter illustrates the point of Remark \ref{spectralnormone}. Specifically, we analyze two scenarios: (1) comparing different subnetwork structures for a given network and (2) comparing different network structures for a given system of equations.

For the first experiment, we consider a 7-node binary network and compare leaf subnetworks to extended subnetworks as depicted in Figure \ref{fig:subnetworks}. We assign the relaxation parameters as follows: set $\omega_v = 1.5$ if the node $v$ is not associated with a subnetwork; set $\omega_v = \omega$ if the node $v$ belongs to a subnetwork. Then we calculate the spectral radius of the operator $P^{\vec{\omega}}$ as a function of $\omega$. For a baseline, we include the spectral radius of the network with no subnetwork structures in this set-up, which we label uniform.

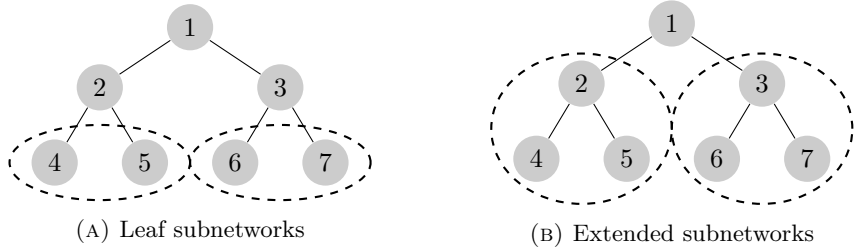
\begin{figure*}[h!]
    \centering
    \begin{subfigure}[h]{0.5\textwidth}
        \centering
        
        \begin{tikzpicture}    

        \def\w{0.8}
        \def\u{0.4}
                
        \node[circle,fill=black!20!white] (1d) at (-3*\w,-6*\u) {1};
        \node[circle,fill=black!20!white] (2d) at (-4.5*\w,-8*\u) {2};
        \node[circle,fill=black!20!white] (3d) at (-1.5*\w,-8*\u) {3};
        \node[circle,fill=black!20!white] (4d) at (-5.25*\w,-10.5*\u) {4};
        \node[circle,fill=black!20!white] (5d) at (-3.75*\w,-10.5*\u) {5};
        \node[circle,fill=black!20!white] (6d) at (-2.25*\w,-10.5*\u) {6};
        \node[circle,fill=black!20!white] (7d) at (-0.75*\w,-10.5*\u) {7};
        
        \draw (1d) to (3d);
        \draw (1d) to (2d);
        \draw (2d) to (4d);
        \draw (2d) to (5d);
        \draw (3d) to (6d);
        \draw (3d) to (7d);
        
        \draw[black,thick,dashed] (-4.5*\w,-10.5*\u) 
        ellipse (1.5*\w cm and 1.25*\u cm);
        \draw[black,thick,dashed] (-1.5*\w,-10.5*\u) 
        ellipse (1.5*\w cm and 1.25*\u cm);
        
        \end{tikzpicture}
        
        \caption{Leaf subnetworks}
    \end{subfigure} %
    ~
    \begin{subfigure}[h]{0.5\textwidth}
        \centering
        
        \begin{tikzpicture}    

        \def\w{0.8}
        \def\u{0.4}
                
        \node[circle,fill=black!20!white] (1d) at (-3*\w,-6*\u) {1};
        \node[circle,fill=black!20!white] (2d) at (-4.5*\w,-8*\u) {2};
        \node[circle,fill=black!20!white] (3d) at (-1.5*\w,-8*\u) {3};
        \node[circle,fill=black!20!white] (4d) at (-5.25*\w,-10.5*\u) {4};
        \node[circle,fill=black!20!white] (5d) at (-3.75*\w,-10.5*\u) {5};
        \node[circle,fill=black!20!white] (6d) at (-2.25*\w,-10.5*\u) {6};
        \node[circle,fill=black!20!white] (7d) at (-0.75*\w,-10.5*\u) {7};
        
        \draw (1d) to (3d);
        \draw (1d) to (2d);
        \draw (2d) to (4d);
        \draw (2d) to (5d);
        \draw (3d) to (6d);
        \draw (3d) to (7d);
        
        \draw[black,thick,dashed] (-4.5*\w,-9.5*\u) 
        ellipse (1.5*\w cm and 2.5*\u cm);
        \draw[black,thick,dashed] (-1.5*\w,-9.5*\u) 
        ellipse (1.5*\w cm and 2.5*\u cm);
        
        \end{tikzpicture}
        
        \caption{Extended subnetworks}
    \end{subfigure}
    \caption{The 7-node binary network with its subnetworks}
    \label{fig:subnetworks}
\end{figure*}

\begin{figure}[h!]
    \centering
    \includegraphics[scale = 0.325]{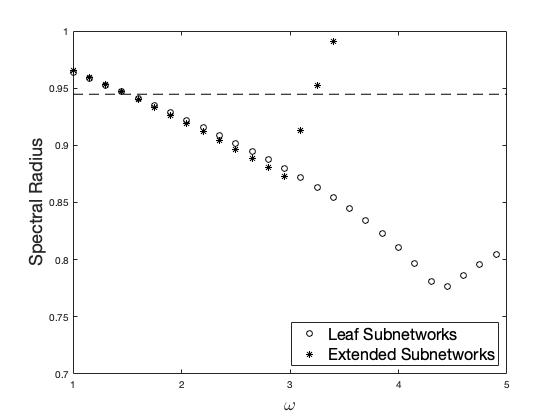}
    \caption{Spectral radius of $P^{\vec{\omega}}$ for two subnetwork structures. The dashed line represents a network with uniformly distributed relaxation parameters $\omega = 1.5$.}
\end{figure}

The numerical experiments suggest that the leaf subnetwork structures are more practical than the extended subnetwork structures for two reasons. In general, the spectral radius of $P^{\vec{\omega}}$ is decreasing for $\omega$ slightly larger than $1.5$ and is, therefore, comparatively smaller than the baseline established by the uniform case in which all of the parameters are set to 1.5. In this situation, we find that the spectral radius tends to be smaller than the baseline for relatively large relaxation parameters in the case of the leaf subnetwork structures and less so in the case of the extended subnetwork structures. This implies that parameter selection is more reliable for leaf subnetworks than for their extended counterparts. Second, the spectral radius is often smaller for leaf subnetworks when the parameters are large. We believe that these observations are a consequence of the pooling stage which is a poor method of producing the next iterate in the distributed Kaczmarz algorithm from the leaf estimates. The depth of the extended network increases the number of overrelaxed projections, often leading to adverse results in the pooling stage.



\begin{figure*}[h!]
    \centering
    \begin{subfigure}[t]{0.5\textwidth}
        \centering
        \begin{tikzpicture}    

        \def\w{0.8}
        \def\u{0.4}
        
        \node[circle,fill=black!20!white] (1d) at (-3*\w,-6*\u) {1};
        \node[circle,fill=black!20!white] (2d) at (-4.5*\w,-8*\u) {2};
        \node[circle,fill=black!20!white] (3d) at (-1.5*\w,-8*\u) {3};
        \node[circle,fill=black!20!white] (4d) at (-5.25*\w,-10.5*\u) {4};
        \node[circle,fill=black!20!white] (5d) at (-3.75*\w,-10.5*\u) {5};
        
        \draw (1d) to (3d);
        \draw (1d) to (2d);
        \draw (2d) to (4d);
        \draw (2d) to (5d);

        \end{tikzpicture}
        \caption{Network I}
    \end{subfigure} %
    ~
    \begin{subfigure}[t]{0.5\textwidth}
        \centering
        \begin{tikzpicture}    

        \def\w{0.8}
        \def\u{0.4}
                
        \node[circle,fill=black!20!white] (1d) at (-3*\w,-6*\u) {1};
        \node[circle,fill=black!20!white] (2d) at (-4.5*\w,-8*\u) {2};
        \node[circle,fill=black!20!white] (3d) at (-1.5*\w,-8*\u) {3};
        \node[circle,fill=black!20!white] (4d) at (-4.5*\w,-10.5*\u) {4};
        \node[circle,fill=black!20!white] (5d) at (-1.5*\w,-10.5*\u) {5};
        
        \draw (1d) to (3d);
        \draw (1d) to (2d);
        \draw (2d) to (4d);
        \draw (3d) to (5d);
        
        \end{tikzpicture}
        \caption{Network II}
    \end{subfigure}
    \caption{Two networks for a system of five equations}
    \label{fig:5nodenetworks}
\end{figure*}
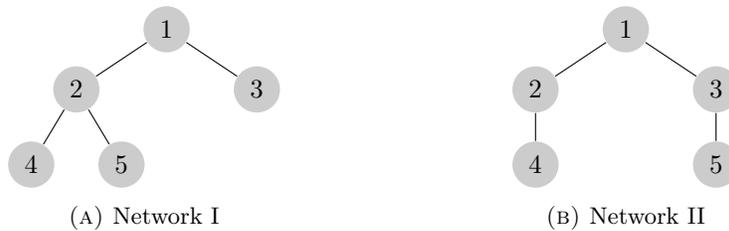


For the second experiment, we consider the different network structures given in Figure \ref{fig:5nodenetworks} for a system of five equations. We compare the network structures for two kinds of systems: (1) entries of $A$ are randomly selected from a uniform distribution over $[0,1]$ and (2) $A$ is nearly orthogonal by perturbing the identity. Further, the entries of $\vec{b}$ are also randomly selected from a uniform distribution over $[0,1]$. We present results of numerical experiments for the nearly orthogonal system in Table \ref{table:orthogonal} and for the random system in Table \ref{table:random}. We include the optimal relaxation parameters that yield the minimum spectral radius along with an error estimate of an iterate using the optimal parameters. Figure \ref{fig:specradi_orth} shows how the spectral radius varies with respect to the relaxation parameter for networks I and II with leaf subnetworks. For network I, $\omega_1$ is on node 3, and $\omega_2$ is on the leaf subnetwork composed of nodes 4 and 5. For network II, $\omega_1$ is on node 5, and $\omega_2$ is on node 4.

\begin{table}[h!]
    \centering
    \begin{tabular}{c|c|c|c|c|c}
        & \multicolumn{3}{c|}{Leaf subnetworks} & \multicolumn{2}{c}{Uniform} \\
        \hline
        Network Type & ($\omega_1$, $\omega_2$)$_{\text{opt}}$ & $\rho(P^{\vec{\omega}})$ & $\|A\vec{x}^{(10)} - \vec{b}\|$ & $\rho(P^{\vec{\omega}})$ & $\|A\vec{x}^{(10)} - \vec{b}\|$ \\
        \hline
        I & (2.27,\,3.93) & 0.36532 & 3.479e-4 & 0.66617 & 3.6441e-3 \\
        \hline
        II & (1.49,\,2.52) & 0.37492 & 3.4554e-4 & 0.47598 & 5.7009e-4 \\
        \hline
    \end{tabular}
    \vspace{0.5cm}
    \caption{Comparing networks I and II for a nearly orthogonal system}
    \label{table:orthogonal}
\end{table}

\begin{table}[h!]
    \centering
    \begin{tabular}{c|c|c|c|c|c}
        & \multicolumn{3}{c|}{Leaf subnetworks} & \multicolumn{2}{c}{Uniform} \\
        \hline
        Network Type & ($\omega_1$, $\omega_2$)$_{\text{opt}}$ & $\rho(P^{\vec{\omega}})$ & $\|A\vec{x}^{(1500)} - \vec{b}\|$ & $\rho(P^{\vec{\omega}})$ & $\|A\vec{x}^{(1500)} - \vec{b}\|$ \\
        \hline
        I & (7.92,\,8.06) & 0.98844 & 1.5743e-8 & 0.99626 & 1.7049e-3 \\
        \hline
        II & (4.57,\,3.90) & 0.99512 & 8.3191e-4 & 0.99619 & 1.3288e-3 \\
        \hline
    \end{tabular}
    \vspace{0.5cm}
    \caption{Comparing networks I and II for a random system}
    \label{table:random}
\end{table}

\begin{figure*}[h!]

    \centering
    \begin{subfigure}[t]{0.5\textwidth}
        \centering
        \includegraphics[scale=0.3]{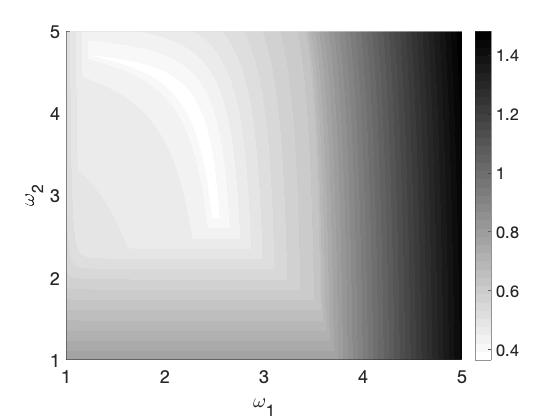}
        \caption{Network I}
    \end{subfigure}%
    ~
    \begin{subfigure}[t]{0.5\textwidth}
        \centering
        \includegraphics[scale=0.3]{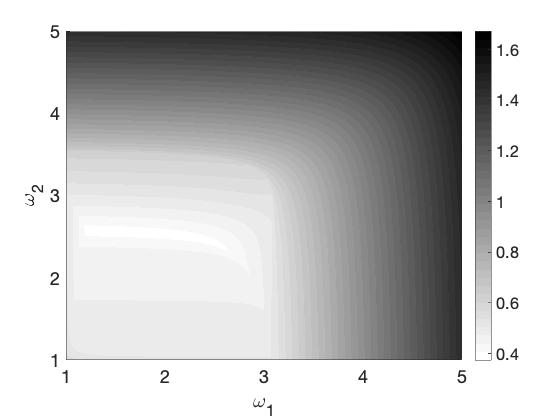}
        \caption{Network II}
    \end{subfigure}
    \caption{Spectral radii for a nearly orthogonal system}
    \label{fig:specradi_orth}
\end{figure*}

For both the nearly orthogonal and random systems, we see that the relaxation parameter is allowed to be larger than 2 to achieve convergence. Note also that the spectral radius $\rho(P^{\vec{\omega}})$ for systems with leaf subnetworks is smaller than the uniform system; hence we see better performance. For the nearly orthogonal systems with leaf subnetworks, we do not need many iterations of the algorithm to achieve a smaller error than the uniform system. However, for the random system, we need many more iterations to achieve this smaller error.

\section{Acknowledgements}
Riley Borgard, Haley Duba, Chloe Makdad, Jay Mayfield, and Randal Tuggle were supported by the National Science Foundation through the REU award \#1457443. 
Steven Harding and Eric Weber were supported by the National Science Foundation and the National Geospatial-Intelligence Agency under award \#1830254.  Eric Weber was also supported under award \#1934884.

\bibliographystyle{amsplain}
\bibliography{bibliography.bib,kaczmarz-reu.bib}

\end{document}